\newcommand{\CC}{{\cal C}}
\renewcommand{\phi}{\varphi}
\newtheorem*{theorem*}{Theorem}
\newtheorem{definition}{Definition}
\newtheorem{theorem}[definition]{Theorem}
\newtheorem{lemma}[definition]{Lemma}
\newtheorem{proposition}[definition]{Proposition}
\newtheorem{corollary}[definition]{Corollary}
\newtheorem{remark}[definition]{Remark}
\newcommand{\etalchar}[1]{$^{#1}$}
\newcommand{\abra}[1]{{\left\langle #1 \right\rangle}}
\newcommand{\cbra}[1]{{\left\{ #1 \right\}}}
\newcommand{\F}{\mathbb{F}}
\newcommand{\eps}{\varepsilon}
\renewcommand{\epsilon}{\eps}
\newcommand{\sse}{\subseteq}
\renewcommand{\Pr}{\operatorname{{\bf Pr}}}
\newcommand{\ignore}[1]{}
\title{Sumsets in the Hypercube \vspace{0.5em}}
\author{
Noga Alon
\thanks{Princeton University,
Princeton, NJ, USA
and
Tel Aviv University, Tel Aviv,
Israel.
Email: {\tt nalon@math.princeton.edu}.
Research supported in part by
NSF grant DMS-2154082.}
\and 
Or Zamir
\thanks{Tel Aviv University, Tel Aviv, Israel.
Email: {\tt orzamir90@gmail.com}
\vspace{0.5em}
}}
\date{}
\begin{document}

\pagenumbering{gobble}
\maketitle  

\begin{abstract}
    A subset $S$ of the Boolean hypercube $\F_2^n$ is a \emph{sumset} if $S = A+A = \{a + b \ | \  a, b\in A\}$ for some $A \subseteq \F_2^n$. 
    We prove that the number of sumsets in~$\F_2^n$ is asymptotically
	$(2^n-1)2^{2^{n-1}}$.
    Furthermore, we show that the family of sumsets in~$\F_2^n$ is 
	almost identical to the family of all subsets of~$\F_2^n$ 
	that contain a complete linear subspace of co-dimension~$1$.
\end{abstract}

\pagenumbering{arabic}

\section{Introduction}
\label{sec:intro}

A subset $S$ of an Abelian group~$G$ is a \emph{sumset} if $S = A+A = \{a + b \ | \  a, b\in A\}$ for some $A \subseteq G$.
Sumsets are among the most fundamental objects studied in \emph{additive combinatorics} (for a comprehensive survey, see the book of Tao and Vu~\cite{tao2006additive}).
When~$G$ is finite, then arguably the simplest question regarding sumsets in~$G$ is how many distinct ones there are. A classical work of Green and Ruzsa~\cite{green2004counting} and later refinements~\cite{sargsyan2015counting} study this question for~$G=\F_p$.
Other well studied counting questions with regards to sumsets include estimating the number of different sums~$A+B$ when the sizes of~$|A|$ and~$|B|$ are both large~\cite{alon2010number, sapozhenko2020number}; 
or estimating the number of sets~$A$ with a small sumset~$|A+A|\leq K|A|$~\cite{freiman1973foundations, green2005counting, alon2014refinement, green2016counting}.

Most of the results cited above study the case of~$G=\F_p$ for some large prime~$p$.
In this work, we focus on the Boolean hypercube~$G=\F_2^n$, that is, the
vector space of dimension $n$ over $F_2$.
This choice naturally comes up in applications of additive combinatorics to theoretical computer science (see for example the surveys of~\cite{additive-comb-minicourse,Trevisan09,Viola11ac,Bibak13,Lovett17actcs}).
Furthermore, vectors spaces~$\F_p^n$ over finite fields (also called finite field models) are often easier to study due to the availability of tools from linear algebra~\cite{Green05}.
Recent breakthroughs in additive combinatorics also focus on finite field models:
Kelley and Meka's bounds for~$3$-progressions~\cite{kelley2023strong} 
are proven starting with an analysis in~$\F_p^n$; Gowers, Green, Manners and Tao prove the polynomial Freiman--Ruzsa conjecture in~$\F_2^n$~\cite{gowers2023conjecture}.

Put~$N:=2^n$ and denote the family of sumsets in~$\F_2^n$ by~$\mathcal{S}_n := \{ A+A \ | \ A\subseteq \F_2^n\}.$
Our main focus in this work is to understand 
the cardinality of~$\mathcal{S}_n$ and the typical structure of an element
in it.
It was shown in~\cite{sargsyan2015counting} that~$|\mathcal{S}_n| = 2^{N/2 + o(N)}$.
In a recent work on property testing of sumsets~\cite{chen2024testing}, this bound was improved to show that~$|\mathcal{S}_n|$ is between~$2^{N/2}$ and~$2^{N/2+n^2+O(1)}$.
The following theorem  provides an asymptotic 
formula for~$|\mathcal{S}_n|$. Here and in what follows the notation
$f \sim g$ for two functions $f$ and $g$ denotes
that $f=(1+o(1))g$ where the $o(1)$-term tends to $0$ as the parameters
of the functions grow to infinity. Equivalently, this means that the
limit of the ratio $f/g$ as the parameters grow is $1$.

\begin{theorem}\label{thm:asymptotic}
	$|\mathcal{S}_n| \sim (2^n-1)2^{N/2}$.
\end{theorem}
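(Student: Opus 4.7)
The plan is to prove matching upper and lower bounds of $(1 \pm o(1))(2^n-1) 2^{N/2}$, formalizing the heuristic from the abstract that sumsets in $\F_2^n$ essentially correspond to subsets containing a codimension-$1$ linear subspace. For each of the $2^n - 1$ codim-$1$ subspaces $H \leq \F_2^n$, fix an arbitrary representative $c_H \notin H$.

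\textbf{Lower bound.} For $T \subseteq c_H + H$, set $A := T \cup \{0\}$; then $A + A = (T+T) \cup T \cup \{0\}$. Since $T+T \subseteq H$ automatically, we get $A+A = H \cup T$ whenever $T+T = H$. I would verify that $T + T = H$ holds for all but an exponentially small fraction of $T$: for each $h \in H \setminus \{0\}$, the involution $a \mapsto a+h$ is fixed-point-free on $c_H + H$, partitioning it into $N/4$ pairs, and ``$h \notin T+T$'' is precisely the event that no pair lies fully in $T$, which occurs with probability $(3/4)^{N/4}$ for uniformly random $T$. A union bound gives $\Pr[T+T \neq H] = 2^{-\Omega(N)}$, so each $H$ yields $(1-o(1)) 2^{N/2}$ distinct sumsets of the form $H \cup T$. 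Combining via Bonferroni's inequality across the $2^n-1$ choices of $H$ --- using that distinct $H_1, H_2$ satisfy $|H_1 \cup H_2| = 3N/4$, so the number of subsets containing both is exactly $2^{N/4}$, contributing $O(2^{2n + N/4}) = o((2^n-1) 2^{N/2})$ to the pair-intersection term --- gives $|\mathcal{S}_n| \geq (1-o(1))(2^n-1) 2^{N/2}$.

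\textbf{Upper bound.} I would partition $\mathcal{S}_n$ into three classes: \emph{Class A}, sumsets that contain some codim-$1$ subspace; \emph{Class B1}, sumsets contained in some codim-$1$ subspace but not equal to one; and \emph{Class B2}, sumsets that neither contain nor are contained in any codim-$1$ subspace. Class A is bounded trivially by $|\bigcup_H \{S : S \supseteq H\}| \leq (2^n - 1) 2^{N/2}$, which is the target bound. Class B1 is handled by noting that $A+A \subseteq H'$ forces $A$ into a coset of $H'$, whose sumset corresponds bijectively to a sumset of $H' \cong \F_2^{n-1}$; thus $|\text{B1}| \leq (2^n-1) |\mathcal{S}_{n-1}|$, and even the crude bound $|\mathcal{S}_{n-1}| \leq 2^{N/4 + O(n^2)}$ from~\cite{chen2024testing} suffices to make this $o((2^n-1) 2^{N/2})$.

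\textbf{Main obstacle.} The principal challenge is bounding Class B2: sumsets $S = A+A$ such that $A$ affinely spans $\F_2^n$ but $A+A$ omits at least one element from every codim-$1$ subspace. Equivalently, both $S$ and $\F_2^n \setminus S$ affinely span $\F_2^n$. The bound of $2^{N/2+n^2+O(1)}$ from~\cite{chen2024testing} is too weak here by a factor of $2^{n^2-n}$. A sharper argument must exploit the structural constraint: one expects such $A$ to be close to a ``minimal'' affinely spanning set (for instance, $\{0\} \cup B$ for a basis $B$, or a small perturbation thereof), of which there are only $2^{O(n^2)}$ up to the equivalence $A \sim A'$ iff $A+A = A'+A'$. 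Making this precise and exhaustively handling all structural families in B2 is the main technical step; a resulting bound of $|\text{B2}| \leq 2^{O(n^2)}$ would be utterly negligible compared to the doubly-exponential $2^{N/2} = 2^{2^{n-1}}$, completing the proof.
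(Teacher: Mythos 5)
Your lower bound is sound and is essentially the paper's: the paper likewise starts from a set $S$ containing $v^\perp$, sets $A := \{0\} \cup (S \setminus v^\perp)$, and shows that only $2^n \cdot 3^{N/2}$ choices of the coset--part can fail to regenerate the subspace (its Lemma~\ref{lem:mostA}, proved via the fact that $A$ must be independent in a perfect matching if $h \notin A+A$), then separately establishes $|\mathcal{H}_n| \sim (2^n-1)2^{N/2}$ by inclusion-exclusion as you do. Your Class~A bound is trivially correct, and your Class~B1 argument (that $A+A \subseteq H'$ forces $A$ into a coset of $H'$, so $|\mathrm{B1}| \le (2^n-1)|\mathcal{S}_{n-1}| = 2^{N/4 + O(n^2)}$) is also correct.

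The gap is Class~B2, and it is a real one. You explicitly leave this step as a hope: ``one expects such $A$ to be close to a minimal affinely spanning set \dots of which there are only $2^{O(n^2)}$.'' That expectation is both unjustified and almost certainly false. Nothing in the assumption that $S = A+A$ neither contains nor is contained in a hyperplane forces $A$ to be small or to resemble a basis; the paper's actual bound for all of $\mathcal{S}_n \setminus \mathcal{H}_n$ (which subsumes your B2) is only $2^{N/2 - \sqrt{N}/2^{\Theta(\sqrt{n})}}$, vastly larger than $2^{O(n^2)}$, and even that requires substantial machinery. Moreover, your trichotomy (contains / contained in / neither a hyperplane) is not the structurally useful one. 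The distinction that actually unlocks the problem is whether $\F_2^n \setminus S$ has full linear rank. If it does, Lemma~\ref{lem:cayley} plus a change of basis turns $A$ into an independent set of the hypercube $Q_n$, and then one invokes the Korshunov--Sapozhenko count together with a new tail bound (Theorem~\ref{thm:balancedIS}, proved via FKG and a Sauer--Shelah/Pajor argument) showing that almost all independent sets of $Q_n$ are extremely parity-imbalanced, hence cannot avoid every even-weight vector in $A+A$. If $\F_2^n \setminus S$ does \emph{not} have full rank, $S$ contains the complement of a hyperplane ($S \in \mathcal{H}'_n$), and a separate argument (Lemma~\ref{lem:hnprime}, built on the unions-of-sumsets bound Theorem~\ref{thm:unions}) handles that. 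Your Class~B2 straddles both of these regimes but provides a proof for neither, and the two require genuinely different tools that your sketch does not gesture at.
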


Furthermore, we provide a simple characterization of almost all sumsets in~$\F_2^n$.
Denote the family of subsets of~$\F_2^n$ that contain a complete linear subspace of co-dimension~$1$ by~$$\mathcal{H}_n := \{S\subseteq \F_2^n \ | \ \exists\; v \in \F_2^n \ . \ v^{\perp} \subseteq S \}.$$
We show that almost all sets in~$\mathcal{S}_n$ are also in~$\mathcal{H}_n$ and vice versa.

\begin{theorem}\label{thm:nearlyidentical}
    $|\mathcal{S}_n \Delta \mathcal{H}_n| = o\left(|\mathcal{S}_n|\right)$.
\end{theorem}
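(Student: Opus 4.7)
The plan is to combine Theorem~\ref{thm:asymptotic} with an explicit construction showing that almost every element of $\mathcal{H}_n$ is a sumset, and then deduce the reverse inclusion by counting. For each nonzero $v \in \F_2^n$, let $\mathcal{H}_v := \{S \subseteq \F_2^n : v^\perp \subseteq S\}$, so $\mathcal{H}_n = \bigcup_{v \ne 0} \mathcal{H}_v$ and $|\mathcal{H}_v| = 2^{N/2}$. First, I would verify by inclusion-exclusion that $|\mathcal{H}_n| \sim (2^n-1)2^{N/2}$. For distinct nonzero $v,w$, the subspaces $v^\perp$ and $w^\perp$ meet in $\{v,w\}^\perp$ of dimension $n-2$, so $|v^\perp \cup w^\perp| = 3N/4$ and $|\mathcal{H}_v \cap \mathcal{H}_w| = 2^{N/4}$. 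Hence $|\mathcal{H}_n| = (2^n-1)2^{N/2} - O(2^{2n}\cdot 2^{N/4}) \sim (2^n-1)2^{N/2} \sim |\mathcal{S}_n|$ by Theorem~\ref{thm:asymptotic}.

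The core step is to show that for each nonzero $v$, all but $o(2^{N/2})$ of the sets in $\mathcal{H}_v$ are sumsets. Fix any $u \in \F_2^n \setminus v^\perp$. A set $S \in \mathcal{H}_v$ has the form $S = v^\perp \cup T$ with $T \subseteq \F_2^n \setminus v^\perp$, and I would try the explicit witness
\[
A := \{u\} \cup (u + T) \subseteq v^\perp \cup \{u\}.
\]
A short calculation shows $A + A = T \cup (T + T)$ for $T \ne \emptyset$: the cross-term $\{u\} + (u + T) = T$ recovers the portion of $S$ outside $v^\perp$, while the self-term $(u+T)+(u+T) = T+T \subseteq v^\perp$ must fill the hyperplane. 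Thus $A + A = S$ precisely when $T + T = v^\perp$. Shifting $T' := u + T \subseteq v^\perp$ turns this into the condition $T' + T' = v^\perp$ for a uniformly random $T' \subseteq v^\perp$. For each fixed $x \in v^\perp \setminus \{0\}$, partition $v^\perp$ into the $N/4$ disjoint pairs $\{t, t+x\}$; the event $x \in T' + T'$ is the disjunction of the independent events that one such pair lies entirely in $T'$, each of probability $1/4$. A union bound gives
\[
\Pr[T' + T' \ne v^\perp] \leq 2^{-N/2} + (N/2 - 1)(3/4)^{N/4} = o(1),
\]
so $|\mathcal{H}_v \setminus \mathcal{S}_n| = o(2^{N/2})$. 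Summing over the $2^n - 1$ choices of $v$ gives $|\mathcal{H}_n \setminus \mathcal{S}_n| = o((2^n-1)2^{N/2}) = o(|\mathcal{S}_n|)$ by Theorem~\ref{thm:asymptotic}.

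To conclude, combine $|\mathcal{H}_n| \sim |\mathcal{S}_n|$ with the bound above: $|\mathcal{S}_n \cap \mathcal{H}_n| = |\mathcal{H}_n| - |\mathcal{H}_n \setminus \mathcal{S}_n| \sim |\mathcal{S}_n|$, hence $|\mathcal{S}_n \setminus \mathcal{H}_n| = o(|\mathcal{S}_n|)$, and adding the two error terms yields $|\mathcal{S}_n \Delta \mathcal{H}_n| = o(|\mathcal{S}_n|)$.

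The main obstacle, conceptually, is identifying the construction $A = \{u\} \cup (u+T)$: it is a small set of size $|T|+1$, lying almost entirely inside a single hyperplane, and uses the cross- and self-terms of the sumset to recover the two parts of $S$ separately. Once this witness is on the table, the probabilistic estimate is a standard pairing argument and the rest is bookkeeping. I would also remark that the morally harder direction $|\mathcal{S}_n \setminus \mathcal{H}_n| = o(|\mathcal{S}_n|)$ — the statement that most sumsets contain a full hyperplane — comes essentially for free from Theorem~\ref{thm:asymptotic} once the explicit-construction side is handled; without that theorem, it would require a genuine structural result about the shape of a typical sumset.
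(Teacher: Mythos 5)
Your argument for the inclusion $\mathcal{H}_n\setminus\mathcal{S}_n$ being small is essentially the paper's own proof of \cref{thm:lower}: the witness $A=\{u\}\cup(u+T)$ is a translate of the paper's $A=\{\vec 0\}\cup(S\setminus v^\perp)$ (translation does not change the sumset), and the pairing estimate you use to bound $\Pr[T'+T'\ne v^\perp]$ is exactly the count of independent sets in the perfect matching $\Gamma_{\F_2^n}(x)$ that underlies \cref{lem:mostA}. Your inclusion--exclusion computation of $|\mathcal{H}_n|$ also matches \cref{thm:sizeh}. So far so good.

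The problem is the other inclusion. You derive $|\mathcal{S}_n\setminus\mathcal{H}_n|=o(|\mathcal{S}_n|)$ from the identity $|\mathcal{H}_n|\sim|\mathcal{S}_n|$, which you justify by invoking \cref{thm:asymptotic}. But in this paper \cref{thm:asymptotic} is \emph{deduced from} \cref{thm:nearlyidentical} (together with \cref{thm:sizeh}), not proved independently of it. In fact there is no way around this: writing $|\mathcal{S}_n|=|\mathcal{S}_n\cap\mathcal{H}_n|+|\mathcal{S}_n\setminus\mathcal{H}_n|\le|\mathcal{H}_n|+|\mathcal{S}_n\setminus\mathcal{H}_n|$, any upper bound of the form $|\mathcal{S}_n|\le(1+o(1))(2^n-1)2^{N/2}$ already requires $|\mathcal{S}_n\setminus\mathcal{H}_n|=o(|\mathcal{H}_n|)$, which is precisely the step you want to deduce from it. Your proof is therefore circular, and you even flag this yourself at the end by noting that without \cref{thm:asymptotic} the hard direction ``would require a genuine structural result about the shape of a typical sumset.'' That structural result is the actual content of \cref{thm:upper}, which the paper proves directly in Section~6 (not via asymptotics) using \cref{lem:cayley} to pass to independent sets of the hypercube, \cref{thm:unions} to handle the case where $S^c$ is not of full rank, and the parity-balance tail bound \cref{thm:balancedIS} to show that a typical hypercube independent set, after adding one even-weight Cayley generator, cannot avoid covering all of $(\vec 1)^\perp$. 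None of that can be replaced by a citation to \cref{thm:asymptotic}.
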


The containment of large linear subspaces within sumsets was studied beforehand. As a generalization to a question of Bourgain on arithmetic progressions in sums of sets of integers~\cite{bourgain1990arithmetic},
Green asked whether for every large~$A\subseteq\F_2^n$, the sumset~$A+A$ 
must contain a large linear subspace~\cite{Green05}.
It is simple to observe that if~$|A|> \frac{1}{2} |\F_2^n|$, then~$A+A=\F_2^n$.
Sanders~\cite{sanders2011green} showed that if~$|A| > \left(1/2 - \frac{1}{2^9 \sqrt{n}}\right)|\F_2^n|$ then~$A+A$ contains a subspace of co-dimension~$1$, or with our notation,~$A+A\in \mathcal{H}_n$. 
Other works (e.g.,~\cite{sanders2012bogolyubov}) continued studying the relation between the density of~$A$ and the largest linear subspace contained in~$A+A$.

Our lower bound to~$|\mathcal{S}_n|$ is rather straightforward.
For the upper bound, we need several structural claims on 
both sumsets in~$\F_2^n$ and independent sets in the~$n$-th 
dimensional hypercube~$Q_n$, that may be of independent interest.
As a byproduct to the discussion here, 
we show that almost all subsets of~$\F_2^n$ cannot be expressed
as unions of less than $O(N /(n^2 \log n))$ sumsets, and this is
tight up to a factor of $(\log n)^2$. This 
is somewhat counter-intuitive as cardinality-wise, even the family 
of pairs of sumsets is larger than the power set of the 
hypercube, that is, $|\mathcal{P}\left(\F_2^n\right)|<|\mathcal{S}_n|^2$.
The proof works for any finite abelian group, providing similar estimates.

\section{Notation}
\label{sec:prelims}
When the value of~$n$ is fixed and not ambiguous, we use~$N$ 
instead of~$2^n$ and~$A^c$ instead of~$\F_2^n\setminus A$.
We write 
$A + B := \cbra{ a+ b : a\in A, b\in B}.$
If one of the sets is a singleton, e.g. if $A = \{a\}$, we will sometimes write $a+B := \{a\} + B$ instead. 
We use~$v^\perp$ to denote the set of all vectors in~$\F_2^n$ orthogonal to~$v$.
As already mentioned, the notation~$f\sim g$ means that
$f=(1+o(1))g$, that is, $\lim (f/g) = 1$.

We write $H(x)$ to denote the binary entropy function $-x \log_2 x - (1-x) \log_2(1-x)$. Stirling's approximation gives the following helpful identity:
\begin{equation}\label{eq:stirling}
    \binom{n}{\alpha n} = \Theta^*(2^{H(\alpha)n}); \text{ or, equivalently,  }
    \binom{2^n}{\alpha 2^n} = 2^{H(\alpha)2^n} \cdot 2^{\Theta(n)}.
\end{equation}

Given a subset $D$ of an Abelian group $G$, we write $\Gamma_G(D)$ to denote the \emph{Cayley  (sum) graph} of $G$ with respect to the generator set $D$; that is, the graph on the vertex set $G$ that contains the edge $(x, y)$ if and only if $x + y \in D$. When $D=\{x\}$ is a singleton for some $x \in G$, we abuse notation slightly and write $\Gamma_G(x)$ for $\Gamma_G(\{x\})$.

\section{Simple Bounds} \label{sec:simple}
In this section we cover the bounds of~\cite{chen2024testing}, where it is shown that~$|\mathcal{S}_n|$ is between~$2^{N/2}$ and~$2^{N/2+n^2+O(1)}$.

\begin{lemma}\label{lem:cayley}
    Fix a sumset $A + A = S \subseteq \F_2^n$ and a set $D \subseteq \F_2^n$ with $S \cap D = \emptyset$. Then $A$ is an independent set in the Cayley graph $\Gamma_{\F_2^n}(D)$. 
\end{lemma}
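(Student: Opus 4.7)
The claim is essentially a direct unpacking of the definitions, and the plan is just to verify that no edge of the Cayley graph $\Gamma_{\F_2^n}(D)$ lies within $A$. The key observation is that the edge set is $\{(x,y) : x + y \in D\}$, and the sumset condition $A+A = S$ forces every pairwise sum of elements of $A$ to land in $S$, which is disjoint from $D$.

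Concretely, I would argue as follows. Take any two vertices $x, y \in A$ (not necessarily distinct). By definition of the sumset, $x + y \in A + A = S$. Since $S \cap D = \emptyset$, this gives $x + y \notin D$, and hence $(x, y)$ is not an edge of $\Gamma_{\F_2^n}(D)$. Applying this with $x \neq y$ rules out edges inside $A$; applying it with $x = y$ (so that $x+y=0$) rules out the self-loop at each vertex, which is the only subtlety one might worry about given the stated definition of the Cayley graph (where $0 \in D$ would produce self-loops everywhere).

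There is essentially no obstacle: the lemma is a tautological restatement of the disjointness $S \cap D = \emptyset$ in the language of Cayley graphs, and the only thing to be careful about is the (trivial) self-loop case, which is automatically handled by $0 = a+a \in S$ for any $a \in A$ (assuming $A$ is nonempty; if $A$ is empty the statement is vacuous). Thus the entire argument is a couple of lines and requires no additional machinery.
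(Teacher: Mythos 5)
Your proof is correct and follows essentially the same reasoning as the paper's, just stated directly rather than by contradiction: both observe that for $x,y\in A$ we have $x+y\in A+A=S$, which is disjoint from $D$. The added remark about self-loops is a harmless clarification but not a substantive difference.
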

\begin{proof}
    Assume for contradiction that~$A$ is not an independent set in~$\Gamma_{\F_2^n}(D)$. This implies the existence of $x, y \in A$ with $x + y = s$ for some $s \in D$. Hence, $s \in D \cap (A+A) = D \cap S$, which is a contradiction.
\end{proof}

\begin{proposition} \label{prop:sumset-count}
    The number of sumsets in $\F_2^n$ is at most 
    \[2^{2^{n-1} + n^2 + O(1)}.\]
\end{proposition}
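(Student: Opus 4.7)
The plan is to apply \Cref{lem:cayley} with $D$ chosen to be a basis of $\F_2^n$ sitting inside $S^c$. When such a basis $\{v_1,\ldots,v_n\} \subseteq S^c$ exists, the Cayley graph $\Gamma_{\F_2^n}(\{v_1,\ldots,v_n\})$ is isomorphic to the $n$-dimensional hypercube $Q_n$ (via the linear change of coordinates sending $v_i$ to the standard basis vector $e_i$), so by \Cref{lem:cayley} every $A$ with $A+A=S$ must be an independent set of $Q_n$. The Korshunov--Sapozhenko theorem gives that the number of independent sets in $Q_n$ is $2^{2^{n-1}+O(1)}=2^{N/2+O(1)}$, which pins down the number of candidate witnesses.

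Concretely, I would split sumsets into two cases. In the first, $S^c$ contains a basis of $\F_2^n$. Summing over all ordered tuples of $n$ vectors in $\F_2^n$ (at most $(2^n)^n = 2^{n^2}$ of them) and, for each such tuple serving as the generator set of a copy of $Q_n$, over all independent sets $A$ in that $Q_n$, each pair (tuple, $A$) gives rise to at most one sumset $S = A+A$. This case therefore contributes at most $2^{n^2}\cdot 2^{N/2+O(1)}$ sumsets.

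In the second case, $S^c$ does not contain a basis of $\F_2^n$, so $S^c$ lies in some proper subspace, and in particular in some hyperplane $V$. Each of the $2^n-1$ hyperplanes $V$ then admits at most $2^{|V|}=2^{N/2}$ possibilities for $S\cap V$, and $S$ is determined by $V$ together with $S\cap V$ (using $V^c \subseteq S$). This case thus contributes at most $2^n\cdot 2^{N/2}$ sumsets, which is negligible compared to the first.

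Summing, $|\mathcal{S}_n|\le 2^{n^2}\cdot 2^{N/2+O(1)}+2^n\cdot 2^{N/2}=2^{N/2+n^2+O(1)}$, matching the claim. The main obstacle is invoking the tight bound $2^{2^{n-1}+O(1)}$ on the number of independent sets of $Q_n$; however, for the stated Proposition any bound of the form $2^{2^{n-1}+O(n)}$ would suffice, as the extra $O(n)$ loss is absorbed into the $n^2$ term in the exponent.
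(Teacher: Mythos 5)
Your proof is correct and takes essentially the same approach as the paper. Both arguments split on whether $S^c$ contains a basis of $\F_2^n$: when it does, you invoke \Cref{lem:cayley} together with a linear change of coordinates to identify $A$ with an independent set of $Q_n$ and multiply the Korshunov--Sapozhenko bound by the $2^{n^2}$ choices of basis/transformation, and when it does not, you observe that $S$ contains the complement of some hyperplane $v^\perp$ and is determined by $v$ and $S\cap v^\perp$.
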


\begin{proof}

Consider a sumset $S = A+A$; we consider two cases depending on the linear rank of the set $\F_2^n\setminus S$.

\medskip

\noindent{Case 1: $\F_2^n\setminus S$ does not have full rank.} In other words, there exists a vector $v\in\F_2^n$ such that 
\[\abra{x,v} = 1 \qquad\text{implies that}\qquad x \in S.\]
In particular, we have that~$S=S'\cup \left(\F_2^n \setminus v^\perp\right)$ for some $S' \sse v^\perp = \{x\in\F_2^n : \abra{x,v} = 0\}$. As there are at most $2^n$ choices for $v$, and for each choice of $v$ there are at most $2^{2^{n-1}}$ choices for $S'$, we have that there are at most $2^{2^{n-1} + n}$ many sumsets of this form.

\medskip

\noindent {Case 2: $\F_2^n\setminus S$ has full rank.} In particular, there are $n$ linearly independent vectors \emph{not} in $S$. As we have $n$ 
	linearly independent vectors not in $S$ it follows by 
	\cref{lem:cayley} that there exists a nonsingular 
transformation of $\F_2^n$ such that $A$ must be an independent set in 
	the graph of the hypercube~$Q_n$. As the number of independent sets in the hypercube~$Q_n$ is at most $2^{2^{n-1} + O(1)}$ (see for example \cite{numiss,galvin2019independent}), and as the number of nonsingular transformations 
	of the hypercube is at most $2^{n^2}$, it follows that the total number of sumsets of this form is at most 
\[2^{2^{n-1} + n^2 + O(1)}.\]

\medskip

Both cases together complete the proof.
\end{proof}

\begin{proposition}\label{prop:easylower}
    The number of sumsets in~$\F_2^n$ is at least~$2^{2^{n-1}}$.
\end{proposition}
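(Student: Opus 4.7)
The plan is to exhibit an explicit injection from $\mathcal{P}(H)$ into $\mathcal{S}_n$, where $H$ is a fixed hyperplane of $\F_2^n$; this immediately yields $|\mathcal{S}_n| \geq 2^{|H|} = 2^{N/2}$.

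Concretely, I would fix any nonzero $v \in \F_2^n$, let $H := v^\perp$ be the codimension-$1$ subspace orthogonal to $v$, and fix any element $w \in \F_2^n \setminus H$ (for instance $w = v$ itself). For each subset $B \subseteq H$, define $A_B := \{w\} \cup B \subseteq \F_2^n$, which is a valid set, so $S_B := A_B + A_B$ is a sumset. The idea is that including $w$ as a single ``marker'' element outside $H$ forces a copy of $B$ (shifted) to appear in the off-coset portion of $S_B$, while all other interactions stay inside $H$.

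The key computation is a coset decomposition of $S_B$ with respect to $H$: using $w + w = 0$ in characteristic $2$, one has
\[
S_B = A_B + A_B = \{0\} \cup (w + B) \cup (B + B).
\]
Since $B \subseteq H$, both $\{0\}$ and $B + B$ are contained in $H$, whereas $w + B \subseteq w + H$ lies entirely in the other coset of $H$. Consequently
\[
S_B \cap (w + H) = w + B,
\]
and translating by $w$ recovers $B$ uniquely from $S_B$. Hence the map $B \mapsto S_B$ is injective on $\mathcal{P}(H)$.

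There is no real obstacle here; the only point to verify is that the cosets $H$ and $w + H$ are genuinely disjoint (which is automatic since $w \notin H$) so that the intersection $S_B \cap (w+H)$ really isolates the ``$w + B$'' piece and is not contaminated by $\{0\} \cup (B+B) \subseteq H$. Counting, there are $2^{|H|} = 2^{2^{n-1}} = 2^{N/2}$ choices of $B \subseteq H$, each giving rise to a distinct sumset $S_B$, which establishes the bound.
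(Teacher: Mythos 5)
Your proof is correct and essentially mirrors the paper's argument: both place a singleton marker in one coset of a fixed hyperplane $H$ and a variable set in the complementary coset, so that the sumset's intersection with the appropriate coset uniquely recovers the variable set (the paper takes the marker to be $\vec{0} \in H$ with the variable set $\{(1,a) : a \in A\}$ outside $H$, whereas you swap the roles). One minor nit: your parenthetical suggestion $w = v$ only works when $\langle v,v\rangle = 1$, i.e.\ when $v$ has odd Hamming weight, but since you correctly state the requirement $w \notin H$ and only need some such $w$ to exist, this does not affect the argument.
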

\begin{proof}
    For any subset~$A\subseteq \F_2^{n-1}$ of the~$(n-1)$-th dimensional hypercube, we define a subset~$A'\subseteq \F_2^n$ as
    \[
    A':=\{\Vec{0}\} \cup \{(1,a) \ | \ a\in A\},
    \]
    where for a~$(n-1)$-dimensional vector~$a$, the concatenation~$(1,a)$ is defined as the~$n$-dimensional vector where the first coordinate is~$1$ and the other~$(n-1)$ coordinates are equal to~$a$.
    We observe that~$\left(A'+A'\right)\cap \left(\F_2^n \setminus e_1^{\perp}\right) = \{(1,a) \ | \ a\in A\}$. That is, in the sumset~$(A'+A')$ all vectors in which the first coordinate is~$1$ exactly correspond to the set~$A$. 
    In particular, for any~$A_1\neq A_2 \in \F_2^{n-1}$, we have~$(A'_1+A'_1)\neq (A'_2+ A'_2)$.
\end{proof}

\section{Lower Bound}\label{sec:lower}
In this section we improve \cref{prop:easylower}.
We prove that almost all subsets in~$\mathcal{H}_n$ are also sumsets, 
and that the size of~$\mathcal{H}_n$ is asymptotically~$2^{2^{n-1}}(2^n-1)$.
\begin{theorem}\label{thm:sizeh}
	$|\mathcal{H}_n| \sim 2^{2^{n-1}}(2^n-1)$.
\end{theorem}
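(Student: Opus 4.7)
The plan is to use inclusion--exclusion on the $2^n-1$ hyperplanes of co-dimension~$1$ in $\F_2^n$. For each nonzero $v\in\F_2^n$, let $\mathcal{H}_n^v := \{S\subseteq \F_2^n : v^\perp \subseteq S\}$ so that $\mathcal{H}_n = \bigcup_{v\neq 0} \mathcal{H}_n^v$ (the case $v=0$ only contributes the set $\F_2^n$ itself, which is subsumed). Since $|v^\perp|=2^{n-1}$, we have $|\mathcal{H}_n^v| = 2^{N-2^{n-1}} = 2^{2^{n-1}}$ for every nonzero $v$. The upper bound is then immediate from the union bound:
\[
|\mathcal{H}_n| \leq \sum_{v\neq 0} |\mathcal{H}_n^v| = (2^n-1)\cdot 2^{2^{n-1}}.
\]

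For the matching lower bound I would use Bonferroni: $|\mathcal{H}_n| \geq \sum_{v\neq 0}|\mathcal{H}_n^v| - \sum_{v_1\neq v_2} |\mathcal{H}_n^{v_1}\cap \mathcal{H}_n^{v_2}|$. The key computation is that for two distinct nonzero vectors $v_1,v_2$, the two hyperplanes $v_1^\perp, v_2^\perp$ intersect in a subspace of co-dimension~$2$, so $|v_1^\perp \cup v_2^\perp| = 2\cdot 2^{n-1} - 2^{n-2} = 3\cdot 2^{n-2}$ and hence
\[
|\mathcal{H}_n^{v_1}\cap \mathcal{H}_n^{v_2}| = 2^{N - 3\cdot 2^{n-2}} = 2^{2^{n-2}}.
\]
Summing this over the at most $\binom{2^n-1}{2} < 2^{2n}$ pairs gives a total error of at most $2^{2n + 2^{n-2}}$, and we obtain
\[
|\mathcal{H}_n| \geq (2^n-1)2^{2^{n-1}} - 2^{2n + 2^{n-2}}.
\]

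Finally, the ratio of the error term to the main term is
\[
\frac{2^{2n+2^{n-2}}}{(2^n-1)\,2^{2^{n-1}}} \leq 2^{n+1}\cdot 2^{-2^{n-2}} = o(1),
\]
which gives $|\mathcal{H}_n| = (1-o(1))(2^n-1)2^{2^{n-1}}$ and combined with the upper bound yields $|\mathcal{H}_n| \sim (2^n-1)2^{2^{n-1}}$. There is no real obstacle here; the only thing to watch is the standard book-keeping around the zero vector (whose orthogonal complement is the whole space and thus contributes only $\F_2^n$ itself, which is negligible).
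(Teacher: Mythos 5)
Your proposal is correct and follows essentially the same route as the paper: a union bound over the $2^n-1$ hyperplanes $v^\perp$ for the upper bound, and the Bonferroni (truncated inclusion--exclusion) lower bound using that the intersection of any two distinct families $\mathcal{H}_n^{v_1}\cap\mathcal{H}_n^{v_2}$ has size $2^{2^{n-2}}$. The computation $|v_1^\perp\cup v_2^\perp|=3\cdot 2^{n-2}$ and the resulting estimate match the paper's argument exactly.
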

\begin{theorem}\label{thm:lower}
    $|\mathcal{H}_n\setminus \mathcal{S}_n| = o\left(|\mathcal{H}_n|\right)$.
\end{theorem}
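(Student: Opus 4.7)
The plan is to show that for almost every $S \in \mathcal{H}_n$, one can exhibit an explicit $A \subseteq \F_2^n$ with $A + A = S$. Given $S \in \mathcal{H}_n$, choose any nonzero $v$ with $v^\perp \subseteq S$ and write $S = v^\perp \cup T$ with $T \subseteq \F_2^n \setminus v^\perp$. I would take as a candidate $A := T \cup \{0\}$, for which
\[
A + A \;=\; (T + T) \cup T \cup \{0\}.
\]
Since any two elements of the affine hyperplane $\F_2^n \setminus v^\perp$ sum into $v^\perp$, we have $T + T \subseteq v^\perp$; and $0 \in T + T$ as soon as $T$ is non-empty. Hence if $T + T = v^\perp$, then $A + A = v^\perp \cup T = S$ and $S \in \mathcal{S}_n$. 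This choice is somewhat delicate: more naive candidates such as $A = v^\perp \cup T$ or $A = v^\perp \cup \{t_0\}$ both yield $A + A = \F_2^n$, overshooting the target.

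It therefore suffices, for each fixed nonzero $v$, to bound the number of subsets $T \subseteq \F_2^n \setminus v^\perp$ for which $T + T \neq v^\perp$. Thinking of $T$ as a uniformly random subset, fix a nonzero $w \in v^\perp$. The translation $t \mapsto t + w$ is a fixed-point-free involution on the affine hyperplane, so it partitions the $N/2$ points of the hyperplane into $2^{n-2}$ disjoint pairs, and $w \in T + T$ iff at least one such pair lies entirely inside $T$. Each pair is entirely in $T$ independently with probability $1/4$, giving $\Pr[w \notin T + T] = (3/4)^{2^{n-2}}$. Together with $\Pr[0 \notin T + T] = \Pr[T = \emptyset] = 2^{-N/2}$, a union bound over $w \in v^\perp$ yields
\[
\Pr[T + T \neq v^\perp] \;\le\; 2^{n-1}\,(3/4)^{2^{n-2}} + 2^{-N/2},
\]
which is doubly exponentially small in $n$.

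Multiplying by $2^{N/2}$ bounds the number of bad $T$'s per $v$; summing over the $2^n - 1$ nonzero $v$ and noting that every $S \in \mathcal{H}_n \setminus \mathcal{S}_n$ arises as $v^\perp \cup T$ for at least one such bad pair gives
\[
|\mathcal{H}_n \setminus \mathcal{S}_n| \;\le\; (2^n - 1)\cdot 2^{N/2} \cdot \bigl[2^{n-1}(3/4)^{2^{n-2}} + 2^{-N/2}\bigr].
\]
Dividing by the estimate $|\mathcal{H}_n| \sim (2^n - 1)\, 2^{N/2}$ from \cref{thm:sizeh}, the ratio is $O\!\bigl(2^n \cdot (3/4)^{2^{n-2}}\bigr) = o(1)$, which is the desired conclusion. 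The one substantive step is locating the correct witness $A$; once that is in hand, the probabilistic estimate is a routine union bound, so I do not anticipate a real obstacle.
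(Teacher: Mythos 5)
Your proof is correct and takes essentially the same route as the paper's: you use the same witness $A = \{0\} \cup (S \setminus v^\perp)$, reduce to counting $T \subseteq \F_2^n \setminus v^\perp$ with $T + T \neq v^\perp$, and bound this by a union bound over missing elements $w$, observing that translation by $w$ partitions the affine hyperplane into pairs. The paper packages exactly this pair-partition count (the Cayley graph $\Gamma(w)$ is a perfect matching) as \cref{lem:mostA}, applied after shifting $T$ into $v^\perp \cong \F_2^{n-1}$; you inline the same argument directly on the affine hyperplane and arrive at the same bound, of order $2^{2n}\,3^{2^{n-2}}$.
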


\begin{proof}[Proof of \cref{thm:sizeh}]
The set $\mathcal{H}_n$ is a union of the $(2^n-1)$ sets $H(v)$, 
	where for each nonzero vector $v$ in $\F_2^n$, $H(v)$ is the 
	family of all subsets containing $v^{\perp}$. As the size of each
	set $H(v) $ is $2^{2^{n-1}}$ this shows that 
	$|\mathcal{H}_n|\leq 2^{2^{n-1}}(2^n-1)$. 

	Since the intersection
	of each two distinct sets $H(u), H(v)$ is of cardinalty 
	$2^{2^{n-2}}$ it follows that
	$$|\mathcal{H}_n|\geq 2^{2^{n-1}}(2^n-1) 
	-\frac{(2^n-1)(2^n-2)}{2} \cdot 2^{2^{n-2}}  
	\geq 2^{2^{n-1}}(2^n-1) - 2^{2n-1+2^{n-2}} 
	= \left(1-o\left(1\right)\right)2^{2^{n-1}}(2^n-1).$$
\end{proof}

To prove \cref{thm:lower}, we use the following observation.
\begin{lemma}\label{lem:mostA}
    There are at most~$2^n \cdot 3^{2^{n-1}}$ subsets~$A\subseteq \F_2^n$ for which~$A+A \neq \F_2^n$.
\end{lemma}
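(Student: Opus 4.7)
The plan is to observe that if $A+A \neq \F_2^n$, then by definition there exists some vector $v \in \F_2^n$ with $v \notin A+A$. Such a $v$ must be nonzero whenever $A$ is nonempty, because in characteristic $2$ every $a \in A$ contributes $a+a=0$ to $A+A$. So every $A$ with $A+A\neq \F_2^n$ admits a \emph{certificate} $v\in\F_2^n\setminus\{0\}$, and we will count pairs $(v,A)$ and then union bound over $v$.

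Fix a nonzero $v$. By \cref{lem:cayley} (applied to $D=\{v\}$), if $v\notin A+A$ then $A$ is an independent set in the Cayley graph $\Gamma_{\F_2^n}(v)$. I would next note the simple structural fact that $\Gamma_{\F_2^n}(v)$ is a \emph{perfect matching}: since $v\neq 0$, the edges $\{x,x+v\}$ partition $\F_2^n$ into $2^{n-1}$ disjoint pairs (the map $x\mapsto x+v$ is a fixed-point-free involution).

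Counting independent sets in a perfect matching on $2^n$ vertices is immediate: for each of the $2^{n-1}$ pairs $\{x,x+v\}$, an independent set picks either neither element, only $x$, or only $x+v$, giving exactly $3^{2^{n-1}}$ independent sets. Therefore, for each nonzero $v$, the number of $A$ with $v \notin A+A$ is at most $3^{2^{n-1}}$.

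Finally, I would take a union bound over the $2^n-1$ nonzero choices of $v$ (and toss in the empty set if we prefer to handle it separately, though it is already counted via any $v$). This yields at most $(2^n-1)\cdot 3^{2^{n-1}} \leq 2^n\cdot 3^{2^{n-1}}$ subsets $A$ with $A+A\neq \F_2^n$, as claimed. There is no real obstacle here; the only thing worth double-checking is the exact count $3^{2^{n-1}}$ of independent sets in a matching, which follows from independent choices across the $2^{n-1}$ edges.
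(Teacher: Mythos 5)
Your proof is correct and follows essentially the same route as the paper: invoke \cref{lem:cayley}, observe that $\Gamma_{\F_2^n}(v)$ is a perfect matching with $3^{2^{n-1}}$ independent sets, and union bound over $v$. The only cosmetic difference is that you restrict the union bound to nonzero $v$ (giving $2^n-1$ rather than $2^n$), which is a harmless refinement.
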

\begin{proof}
    By \cref{lem:cayley}, if~$x\notin A+A$ then~$A$ is an independent set in~$\Gamma_{\F_2^n}(x)$, which is a perfect matching in~$\F_2^n$.
    Thus, there are at most~$3^{2^{n-1}}$ subsets~$A$ such that~$x\notin A+A$.
    We complete the proof by taking a union bound over all~$2^n$ choices for~$x$.
\end{proof}

\begin{proof}[Proof of \cref{thm:lower}]
    Take~$S\in \mathcal{H}_n$, and denote by~$v$ the vector such that~$v^\perp \subseteq S$.
    Denote by~$A := \{\vec{0}\} \cup \left(S \setminus v^\perp\right)$.
    Note that~$(A+A)\cap \left(\F_2^n \setminus v^\perp\right) = S\setminus v^\perp$.
    Thus, $A+A=S$ (and hence~$S\in \mathcal{S}_n$) if and only if~$(A+A)\cap v^\perp = v^\perp$.
    On the other hand,~$(A+A)\cap v^\perp = \{\vec{0}\} \cup \left(\left(S \setminus v^\perp\right)+\left(S \setminus v^\perp\right)\right)$.
    Denote by~$S' := \left(S \setminus v^\perp\right) - \{u\}$,
	where $u$ is, say, the lexicographically first vector not in 
	$v^{\perp}$. The shift by~$u$ does not change the sumset, and it is now 
	a subset of~$\F_2^n \cap v^\perp$ which is isomorphic 
	to the hypercube $\F_2^{n-1}$ of one dimension less.
    By \cref{lem:mostA} there are at most~$2^{n-1}\cdot 3^{2^{n-2}}$ such subsets~$S'$ for which~$S'+S'$ is not complete.
    We conclude that~$|\mathcal{H}_n\setminus \mathcal{S}_n| \leq 2^n \cdot 2^{n-1}\cdot 3^{2^{n-2}}=o\left(2^{2^{n-1}}\right)$.
\end{proof}

\section{Structural Tools for the Upper Bound}\label{sec:tools}
\subsection{Unions of Sumsets}\label{sec:unions}
In this section we prove an upper bound for the number of 
subsets of $\F_2^n$ that can be written as a union of 
at most $k$ sumsets. 
In the proof of the upper bound in Section~\ref{sec:upper}, 
we in fact use this statement only with regards to unions of two sumsets.
The proof we describe below works for any finite abelian group of order
$N$, and  implies that only a negligible number of subsets of such a group 
can be expressed
as a union of at most $O(N/\log^3 N)$ sumsets. As we describe later,
this can be improved to $O(N/(\log^2 N \log \log N))$, which is tight
up to a factor of $(\log \log N)^2$.
Since for our purpose here we only need
the case of unions of two sumsets we first describe a simple self-contained
proof of a weaker estimate that suffices. As done throughout the paper
here too we make no attempt to optimize the aboslute constants.
\begin{theorem}
\label{thm:unions}
Let $G$ be a finite abelian group of order $N$. Then for any integer
$s \geq 64 \log^2 N$, the number of subsets of $G$ that can be expressed
as a union of at most $k= \lfloor \frac{N}{2s \ln (eN/s)} \rfloor$
	sumsets is at most $2^{N-s/8}+e^{N/2}$.
\end{theorem}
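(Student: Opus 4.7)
I would attack \cref{thm:unions} by a two-case counting argument on the generator tuples $(A_1,\ldots,A_k)$ underlying each $S=\bigcup_{i=1}^k(A_i+A_i)$, parametrised by the threshold $r=s$ on the generator sizes. Each case will contribute one of the two terms of the stated bound.

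\textbf{Small generators (the $e^{N/2}$ term).} First I would count those tuples in which $|A_i|\le s$ for every $i$. Elementary estimates give
$\binom{N}{\le s}^{k}\ \le\ (eN/s)^{sk}\ =\ \exp\!\pbra{sk\,\ln\tfrac{eN}{s}},$
and plugging in $k\le N/(2s\ln(eN/s))$ makes the exponent at most $N/2$. So this case contributes at most $e^{N/2}$ distinct subsets $S$, matching the second term. Note that the choice of $k$ in the theorem statement was tailored precisely so that this calculation works out cleanly with $r=s$.

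\textbf{Large generators (the $2^{N-s/8}$ term).} If some $|A_j|>s$, then by \cref{lem:cayley} this $A_j$ is an independent set of size $>s$ in the $|T|$-regular Cayley graph $\Gamma_G(T)$, where $T=G\setminus S$ and $|T|\ge s$. The plan is to exploit the rigidity of a large independent set in such a dense regular graph to show that (i) the sumset $A_j+A_j$ is itself large, and (ii) its particular algebraic structure forces at least $s/8$ prescribed elements of $G$ to lie outside $S$. Summed over the possible compact ``descriptors'' of a large independent set, this confines $S$ to a family of size at most $2^{N-s/8}$. Combining the two cases gives the advertised bound $2^{N-s/8}+e^{N/2}$.

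\paragraph{Main obstacle.} The large-generator step is the crux. A direct bound on the number of independent sets in an $s$-regular graph (via, e.g., Kahn--Zhao) gives only $\approx 2^{N/2}$ per graph, which when raised to the $k$-th power is far too weak. Making the step work requires exploiting structure particular to Cayley graphs over abelian groups: large independent sets in such graphs are nearly contained in a coset of an index-$2$ subgroup, in the spirit of the results of Sanders~\cite{sanders2011green} cited in the introduction. This should cause many different generator tuples to collapse to the same $S$ and save the required $2^{-s/8}$ factor over the trivial $2^N$ bound. Calibrating the structural argument carefully so as to recover the explicit constant $1/8$ in the exponent (rather than some smaller one) is where most of the technical work will lie.
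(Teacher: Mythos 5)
Your small-generator case is exactly the paper's second case: bound the number of unions of $k$ sumsets each generated by a set of size $<s$ by $\sum_{i<s}\binom{N}{i}^k \leq (eN/s)^{sk} \leq e^{N/2}$. That part is correct and identical to the paper.

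The large-generator case, however, is where you are off the paper's track, and as sketched it cannot be made to work. First, the setup is already shaky: you assert $|T|=|G\setminus S|\geq s$ and treat $\Gamma_G(T)$ as a dense regular graph, but there is no reason for $|T|$ to be large --- $S$ can be almost all of $G$ --- so the ``dense regular graph'' rigidity you want to exploit may simply not be present. Second, and more importantly, the structural ingredient you invoke (large independent sets of Cayley graphs being nearly contained in a coset of an index-$2$ subgroup, in the spirit of Sanders) is a statement tailored to $\F_2^n$ at density close to $1/2$; the theorem you are proving is for an arbitrary finite abelian group $G$ and for generator sets $A_j$ of size merely $>s$, which can be a vanishing fraction of $N$. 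That kind of Bogolyubov/Sanders structure simply does not apply at these densities or in this generality, so the claimed ``collapse'' of many tuples to the same $S$ has no foundation. You flag this yourself as the ``main obstacle,'' and it is: the obstacle is not a matter of calibrating constants but of the structural result not existing.

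The key idea you are missing is much more elementary, and is the standard trick (from the study of independence numbers of random Cayley graphs, \cite{AO95,ALON20131232}) that the paper uses. Every set $A$ of size $s$ contains a subset $A'$ of size $\lceil\sqrt{s}\rceil$ with \emph{many sums}, meaning $|A'+A'|\geq \tfrac14|A'|^2\geq s/4$ (built greedily: keep adding an element to $A'$ that creates at least $\tfrac12(|A'|+1)$ new sums, which is always possible while $|A'|\leq\sqrt{|A|}$). Then for a uniformly random $S\subseteq G$, the probability that any fixed such $A'$ has $A'+A'\subseteq S$ is at most $2^{-s/4}$, and a union bound over the at most $\binom{N}{\sqrt{s}}$ candidates for $A'$ gives probability at most $2^{-s/8}$ that $S$ contains a sumset of any set of size $\geq s$, using $s\geq 64\log^2 N$. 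This is where the first term $2^{N-s/8}$ comes from; no Cayley-graph structure theory is needed, and the argument works for every finite abelian group. The crucial saving is that you union-bound over $\binom{N}{\sqrt{s}}$ sets of size $\sqrt{s}$, not over all $\binom{N}{s}$ sets of size $s$, while still forcing $\Omega(s)$ prescribed elements into $S$.
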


Note that taking, say, $s=N/20$ it follows that the number of sets that
can be expressed as a union of two sumsets is (much) smaller than
$2^{N-N/200}$.

The proof is based on the approach used in the study of the typical
independence number of random Cayley (or Cayley sum) graphs
of abelian groups, see, for example, 
\cite{AO95,ALON20131232,chen2024testing}.
For completeness we
repeat (or slightly paraphrase) those arguments to derive 
an explicit tail bound.

\begin{definition}
We say that a set~$A\subseteq G$ 
	has \emph{many sums} if~$|A+A|\geq \frac{1}{4} |A|^2$.
\end{definition}

\begin{lemma}[Appears in~\cite{ALON20131232,chen2024testing}]
\label{alem:manysums}
Let~$A\subseteq G$ be a non-empty set, then there exists~$A'\subseteq A$ 
such that~$|A'|\geq \sqrt{|A|}$ and~$A'$ has many sums.
\end{lemma}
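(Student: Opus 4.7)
The plan is a greedy construction of $A'$ combined with a double-counting averaging argument. Set $n := |A|$ and $k := \lceil \sqrt{n} \rceil$. I will build $A' = B_k$ iteratively by starting from $B_0 := \emptyset$ and, at step $i \in \{1, \ldots, k\}$, choosing $a_i \in A \setminus B_{i-1}$ so as to maximize $|B_i + B_i|$, where $B_i := B_{i-1} \cup \{a_i\}$. Writing $t_i := |B_i + B_i|$ and $S_{i-1} := B_{i-1} + B_{i-1}$, the whole problem reduces to proving $t_k \geq k^2/4$, since then $|A'| = k \geq \sqrt{n}$ and $|A'+A'| = t_k \geq |A'|^2/4$.

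The crux of the proof is the following averaging inequality, which I expect to be the heart of the argument:
\[
    t_i \;\geq\; t_{i-1} + (i-1)\!\left(1 - \frac{t_{i-1}}{n - i + 1}\right).
\]
The plan to establish it is by double counting. For each $a \in A \setminus B_{i-1}$, the set $a + B_{i-1}$ has exactly $i-1$ elements (translation is a bijection), so adding $a$ contributes at least $(i-1) - |(a + B_{i-1}) \cap S_{i-1}|$ new sums. Summing over $a$ and observing that each pair $(b, s) \in B_{i-1} \times S_{i-1}$ is realized by the unique choice $a = s - b$, I can bound $\sum_{a} |(a + B_{i-1}) \cap S_{i-1}| \leq (i-1) \, t_{i-1}$. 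Dividing by the $n-i+1$ choices of $a$ gives the average, and the greedy pick $a_i$ attains at least the average, giving the recurrence.

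To iterate, I use the uniform estimate $n - i + 1 \geq n/2$ (which holds for $i \leq k$ by a direct check that $\lceil \sqrt n\rceil \leq n/2+1$) to pass to the cleaner form $t_i \geq t_{i-1}(1 - 2(i-1)/n) + (i-1)$. Substituting $w_i := 1 - 2t_i/n$, this linearizes to $w_i \leq w_{i-1}\bigl(1 - 2(i-1)/n\bigr)$, and telescoping with $1 - x \leq e^{-x}$ yields $w_k \leq \exp\!\bigl(-k(k-1)/n\bigr) \leq e^{-1+o(1)}$. Hence $t_k \geq n(1 - e^{-1})/2 - o(n) > 0.3\, n$, which comfortably exceeds $k^2/4 \leq n/4 + O(\sqrt n)$ for $n$ beyond an absolute constant. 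The finitely many remaining small cases are verified by hand --- e.g., for $n \geq 2$ any two distinct elements $\{a, b\} \subseteq A$ already satisfy $|\{a,b\} + \{a,b\}| \geq 2 \geq 2^2/4$.

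The main obstacle is conceptual: identifying the right greedy invariant. Once the averaging inequality is in place, the remaining calculation is routine. The only subtlety is handling the varying denominator $n - i + 1$ during iteration, which I bypass via the crude bound $n/2$; the constant gap between $(1 - e^{-1})/2 \approx 0.316$ and the target $1/4$ leaves ample slack to absorb the lower-order corrections from this crude estimate and from rounding $\sqrt{n}$ up to $k$.
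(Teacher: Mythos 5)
Your overall strategy---build $A'$ greedily, element by element, and use a double-counting/averaging argument to show that each new element must contribute many new sums---is the same one the paper uses, and your double-count (at most $(i-1)\,t_{i-1}$ pairs $(a,b)$ with $a\in A\setminus B_{i-1}$, $b\in B_{i-1}$, $a+b\in B_{i-1}+B_{i-1}$, since the proper-coloring/group structure makes $a$ determined by $(b,s)$) is precisely the averaging estimate in the paper's proof. Where you diverge is in the bookkeeping. The paper does not set up a recurrence: it fixes the \emph{target} gain of $\tfrac{1}{2}(|A'|+1)$ new sums per step, uses the averaging only to show that such an element is always available while $|A'|\le\sqrt{|A|}$ (because then $\tfrac{|A'|^2}{|A|}\le 1$), and then simply sums the guaranteed gains to get $|A'+A'|\ge \tfrac{1}{2}(1+\cdots+|A'|)\ge\tfrac14|A'|^2$. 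This gives an exact inequality for every $n$ with no asymptotics. You instead take the tighter per-step recurrence $t_i\ge t_{i-1}\bigl(1-\tfrac{2(i-1)}{n}\bigr)+(i-1)$, linearize via $w_i=1-2t_i/n$, and telescope with $1-x\le e^{-x}$, landing at $t_k\gtrsim\tfrac{1-e^{-1}}{2}\,n$. That bound is genuinely stronger asymptotically (about $0.316\,n$ vs.\ the paper's $\tfrac14|A'|^2\approx n/4$), but it buys you nothing for the lemma and it costs you a nontrivial loose end: the step $1-x\le e^{-x}$, the relaxation $n-i+1\ge n/2$, and the rounding $k=\lceil\sqrt n\rceil$ together degrade the estimate so that ``$t_k>0.3n>k^2/4$'' only holds for $n$ above a fairly large absolute constant (roughly $n\gtrsim 120$ with your bounds). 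The small cases are not as trivial as your ``e.g.'' suggests---your explicit check with two elements only covers $n\le 4$---so as written there is a gap for the intermediate range. It is easily patched (e.g.\ by observing that any $A'$ of size $k\le 6$ automatically has $|A'+A'|\ge 2k-3\ge k^2/4$, which covers $n\le 36$, and then tightening the constants; or by just iterating your recurrence numerically), but the paper's threshold-criterion formulation is specifically engineered to avoid this issue, and that is the main thing it buys over the recurrence route.
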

\begin{proof}
We construct~$A'$ iteratively in a greedy manner. 
Starting from~$A'=\{a\}$ for an arbitrary~$a\in A$, as long as there 
is any~$x\in A\setminus A'$ such that~$\left(A'+x\right)\setminus 
\left(A'+A'\right) \geq \frac{1}{2} \left(|A'|+1\right)$ 
then we add~$x$ to~$A'$.

Let~$a\in A'$.
We observe that~$|A'+A'|\leq \frac{|A'|\cdot \left(|A'|-1\right)}{2}$, 
and thus for a uniformly chosen~$x$ from~$A$ we have
$$\Pr[a+x \in A'+A']\leq \frac{|A'+A'|}{|A|} \leq 
\frac{|A'|\cdot \left(|A'|-1\right)}{2|A|}.
$$
Hence, for a uniformly chosen~$x$ from~$A$ the expected number of elements~$a\in A'$ for which~$a+x\in A'+A'$ is at most~$|A'| \cdot \frac{|A'|\cdot \left(|A'|-1\right)}{2|A|} = \frac{|A'|-1}{2} \cdot \frac{|A'|^2}{|A|}$. 
    In particular, there exists such an outcome of~$x\in A$.
    If~$|A'|\leq \sqrt{|A|}$ then~$x+a\in A'+A'$ for at most~$\frac{|A'|-1}{2}$ elements~$a\in A'$. Each of the sums~$(x+a)$, for all~$a\in A'$, is unique as~$x$ is fixed. Thus, $\left(A'+x\right)\setminus \left(A'+A'\right) \geq \frac{1}{2} \left(|A'|+1\right)$.
    We conclude that the greedy process would not halt before~$A'$ is of size at least~$\sqrt{|A|}$.
    
    Finally, note that by definition of the greedy process we have
    $$
    |A'+A'| \geq \frac{1}{2}\left(1+2+\ldots+|A'|\right) \geq \frac{1}{4} |A'|^2
    .$$
\end{proof}

\begin{lemma}[Adapted from~\cite{ALON20131232,chen2024testing}]
	\label{alem:manysumsIS}
Let $G$ be a finite abelian group of order $N$ and let $S$ be a uniformly
random subset of $G$. For any integer $s \geq 64 \log^2 N$,
the probability that $S$ contains a sumset $A+A$ for
a set $A$ of size $s$
is at most $2^{-s/8}$.
\end{lemma}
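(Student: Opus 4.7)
The plan is to reduce the existential event over arbitrary $s$-element sets $A$ to an event over much smaller sets with many sums, and then apply a direct union bound. The key observation is that, via \cref{alem:manysums}, the relevant probability can be pushed down from all $\binom{N}{s}$ candidate sets of size $s$ to only $\binom{N}{t}$ candidate subsets of size $t \approx \sqrt{s}$, where the improved lower bound $|A'+A'| \geq t^2/4$ on the sumset size more than compensates for any loss.

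More precisely, suppose $A \subseteq G$ has $|A| = s$ and $A + A \subseteq S$. Running the greedy process from the proof of \cref{alem:manysums} for exactly $t := \lceil \sqrt{s} \rceil$ steps produces a set $A' \subseteq A$ with $|A'| = t$ and $|A'+A'| \geq \frac{1}{2}(1 + 2 + \cdots + t) \geq t^2/4$; crucially, the argument in that proof shows the greedy step never gets stuck before reaching $\lceil \sqrt{s}\rceil$ elements. Since $A' + A' \subseteq A + A \subseteq S$, the event in question implies:
\[
\text{there exists } A' \subseteq G \text{ with } |A'| = t \text{ and } |A'+A'| \geq t^2/4 \text{ such that } A'+A' \subseteq S.
\]
For a uniformly random $S$, each fixed $A'$ with $|A'+A'| \geq t^2/4$ satisfies $A'+A' \subseteq S$ with probability $2^{-|A'+A'|} \leq 2^{-t^2/4}$. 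A union bound then gives
\[
\Pr\bigl[\exists A,\, |A|=s,\, A+A \subseteq S\bigr] \leq \binom{N}{t} 2^{-t^2/4} \leq \left(\frac{eN}{t}\right)^{t} 2^{-t^2/4}.
\]

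It remains to verify that this last quantity is at most $2^{-s/8}$ under the hypothesis $s \geq 64 \log^2 N$. Taking logarithms, this reduces to $t \log_2(eN/t) \leq t^2/4 - s/8$. Since $t \geq \sqrt{s}$ we have $t^2/4 - s/8 \geq t^2/8$, so it suffices to show $\log_2(eN/t) \leq t/8$; this follows easily from $t \geq \sqrt{s} \geq 8 \log_2 N$ (with a tiny amount of slack absorbed into the constant, noting $\log_2(eN/t) \leq \log_2 N + O(1)$). I do not expect any substantial obstacle: the content is entirely in the many-sums lemma already proved, together with the standard observation that $\binom{N}{t}$ grows only like $2^{\tilde{O}(t \log N)}$ while $2^{-t^2/4}$ decays much faster once $t$ exceeds roughly $\log N$. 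The only delicate point is pinning down the greedy process to terminate at the convenient size $t = \lceil\sqrt{s}\rceil$ rather than at some unspecified size $\geq \sqrt{s}$, which is handled by simply truncating after $t$ greedy steps.
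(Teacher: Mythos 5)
Your proposal is correct and follows essentially the same approach as the paper's proof: reduce from $s$-element sets to $\lceil\sqrt{s}\rceil$-element sets with many sums via the greedy construction of \cref{alem:manysums}, then union-bound over $\binom{N}{\lceil\sqrt{s}\rceil}$ choices against the probability $2^{-\Omega(s)}$ that such a sumset lands in $S$. Your observation that the greedy process should be truncated at exactly $\lceil\sqrt{s}\rceil$ steps (rather than taking the unspecified output of the lemma, which only promises size at least $\sqrt{s}$) is a worthwhile bit of care that the paper glosses over but implicitly relies on.
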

\begin{proof}
For any fixed subset $A'$ of size $\sqrt s$ that has many sums 
	the probability that $A'+A'$ is contained in $S$ is at most
	$2^{-s/4}$. Therefore, by a union bound, the probability
	that there is such an $A'$ is at most 
$$
	{N \choose {\sqrt s}} 2^{-s/4} \leq 2^{-s/8},
	$$
	where here we used the fact that $s \geq 64 \log^2 N$.
	By the previous lemma, if there is no such $A'$ then there
	can be no $A$ of size $s$ so that $A+A \subset S$ , completing
	the proof.
\end{proof}

\begin{proof}[Proof of \cref{thm:unions}]
	Let $S$ be a uniform random subset of $G$. By the last lemma,
	the probability that there is a subset of size $s$ whose sumset
	is in $S$ is at most $2^{-s/8}$. Therefore, the number of
	such subsets is at most $2^{N-s/8}$. For any other choice of the
	set $S$, if $S$ can be expressed as a union of sumsets, then each 
	such sumset~$A+A$ has $|A|<s$. The number of
	choices of such a sumset is at most $\sum_{i < s} {N \choose i}
	\leq (eN/s)^s$. The number of possible unions of $k$ such
	sumsets is thus at most $(eN/s)^{sk} \leq e^{N/2}$,
	by the choice of $k$. This completes the proof.
\end{proof}

\subsection{Parity Balance of Independent Sets in the Hypercube}\label{sec:balance}
Korshunov and Sapozhenko~\cite{numiss} proved the following tight bound on the number of independent sets in the hypercube.
See also Galvin's exposition of their proof~\cite{galvin2019independent}.
\begin{theorem}[\cite{numiss}]\label{thm:number_is}
    $i(Q_n) \sim 2\sqrt{e}\cdot 2^{N/2}$.
\end{theorem}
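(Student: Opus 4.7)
The plan is to exploit the bipartition $V(Q_n)=\mathcal{E}\sqcup\mathcal{O}$ of the hypercube into vertices of even and odd Hamming weight, each of size $N/2$. For any independent set $I$, write $A:=I\cap\mathcal{E}$ and $B:=I\cap\mathcal{O}$. By the symmetry swapping the two parities, I would estimate the number of $I$ with $|B|\leq|A|$ and then double, paying only a negligible correction for the boundary regime $|A|=|B|$. Since independence is equivalent to $A\cap N(B)=\emptyset$, for every fixed $B$ there are exactly $2^{N/2-|N(B)|}$ admissible choices of $A$, so the whole proof reduces to carefully evaluating
\[
\sum_{B\subseteq\mathcal{O}} 2^{N/2-|N(B)|}.
\]

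The heuristic that drives the constant is that the dominant contribution comes from \emph{well-separated} $B$, where the vertices of $B$ lie at pairwise graph distance $\geq 3$, so that their neighborhoods in $\mathcal{E}$ are pairwise disjoint and $|N(B)|=n|B|$. For each fixed $k\geq 0$, the number of well-separated $k$-subsets of $\mathcal{O}$ equals $(1+o(1))\binom{N/2}{k}$, since a uniformly random $k$-subset fails the separation condition with probability $O(k^2 n^2 /N)$. Each such $B$ contributes $2^{N/2-nk}$ admissible $A$'s, so summing over $k$ gives
\[
\sum_{k\geq 0} \binom{N/2}{k}\cdot 2^{N/2-nk} \;\sim\; 2^{N/2}\sum_{k\geq 0}\frac{1}{k!\cdot 2^k} \;=\; \sqrt{e}\cdot 2^{N/2}.
\]
Multiplying by the factor $2$ coming from the parity symmetry would produce the claimed asymptotic $2\sqrt{e}\cdot 2^{N/2}$.

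The main obstacle is showing that the non-well-separated and \emph{large}-$B$ regimes together contribute only $o(2^{N/2})$. Close pairs in $B$ at distance $\leq 2$ decrease $|N(B)|$ by only $O(1)$ per incident pair, but also cut the number of admissible $B$'s by a matching factor $O(n^2/N)$, so a direct union bound handles all $|B|$ up to, say, a small power of $\log n$. For genuinely large $B$, I would invoke Sapozhenko's graph-container method as presented in Galvin's exposition~\cite{galvin2019independent}: each large minority side is encoded by a compact approximation of size $O(|N(B)|/n)$, and an isoperimetric bound in $Q_n$ forces $|N(B)|$ to grow fast enough in $|B|$ that the double sum over approximations and over the refinements compatible with each approximation is $o(2^{N/2})$. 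This container estimate is the technically demanding step; once it is in hand, combining it with the main term above yields $i(Q_n)\sim 2\sqrt{e}\cdot 2^{N/2}$.
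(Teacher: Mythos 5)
This statement is cited in the paper from Korshunov and Sapozhenko~\cite{numiss} (see also Galvin's exposition~\cite{galvin2019independent}); the paper does not include its own proof, so there is no proof of the authors' to compare against. What the paper does supply, in the paragraph immediately following the theorem, is the lower-bound side of the asymptotic: counting independent sets whose minority side has size $k$ gives roughly $2\binom{N/2}{k}2^{N/2-nk}$, and summing over $k$ yields $2(\sqrt{e}-o(1))2^{N/2}$. Your computation of the dominant term is essentially that same calculation and correctly locates the constant $2\sqrt{e}$.

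The genuine gap in your proposal is the matching upper bound, which is the entire content of the Korshunov--Sapozhenko theorem. You must show that independent sets in which both $|I\cap\mathcal{E}|$ and $|I\cap\mathcal{O}|$ are large contribute only $o(2^{N/2})$, and for this you only gesture at Sapozhenko's graph-container method via Galvin's exposition without stating, let alone executing, the container and isoperimetry estimates that make it work. Your remark that ``a direct union bound handles all $|B|$ up to a small power of $\log n$'' is too loose to be checked, and in any case it leaves an enormous gap up to the $|B|=\Theta(N)$ regime where the containers are actually needed. There is also a small logical slip: $\sum_{B\subseteq\mathcal{O}}2^{N/2-|N(B)|}$ already equals $i(Q_n)$ exactly, with no restriction on which side is the minority, so ``restrict to $|B|\le|A|$ and double'' should not be presented as a reduction to that unrestricted sum. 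As written, your argument is a correct outline of the standard proof with the hard half deferred to the literature --- which is, in effect, exactly what the paper does by quoting the result --- but it is not a self-contained proof.
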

In almost all such independent sets, all but very 
	few of the vertices have the same Hamming weight parity.
	Let ~$\mathcal{E}$ denote the set of vertices of even weight
	of $Q_n$ and let $\mathcal{O}$ denote the set of vertices
	with odd weight.
As both~$\mathcal{E}$ and~$\mathcal{O}$ are independent sets in~$Q_n$ of size~$N/2$, it follows that there are~$2\cdot 2^{N/2} - 1$ independent sets in which all vertices have the same parity. This turns out to not be too far off from the total number of independent sets.
For a small~$k$, assume that~$|I\cap \mathcal{E}| = k$.
There are~${N/2 \choose k} \approx \frac{2^{k(n-1)}}{k!}$ choices for~$|I\cap \mathcal{E}|$, each of those is an independent set.
As~$Q_n$ is~$n$-regular, there are at most~$nk$ 
vertices of~$\mathcal{O}$ that neighbor any vertex of~$I\cap \mathcal{E}$.\footnote{We note that when~$k$ is small, this is also the typical number of such vertices.}
Hence, there are 
at least $N/2 - nk$ vertices in~$\mathcal{O}$ among which we may choose any subset to complete the independent set~$I$.
Therefore, the number of independent sets~$I$ with~$|I\cap \mathcal{E}| = k$ is at least~${N/2 \choose k} \cdot 2^{N/2-nk} \approx \frac{2^{k(n-1)}}{k!} \cdot 2^{N/2-nk} = \frac{2^{-k}}{k!} \cdot 2^{N/2}$.
By symmetry, we conclude that the number of independent sets in~$Q_n$ in which~$\min\{|I\cap \mathcal{E}|,\;|I\cap\mathcal{O}|\} \leq k$ is at least
$$
\left(1-o\left(1\right)\right)\cdot 2\cdot \sum_{i=0}^{k} \left(\frac{2^{-k}}{k!} \cdot 2^{N/2}\right) = 
2 \cdot \left(\sqrt{e} - o_k\left(1\right)\right) \cdot 2^{N/2}
.$$
As this simple lower bound is already matching the upper bound in \cref{thm:number_is}, we conclude the following.
\begin{corollary}\label{lem:simplerbalancedupper}
    Let~$k(n)=\omega_n(1)$ be any super-constant function in~$n$, then the number of independent sets~$I$ in~$Q_n$ in which~$\min\{|I\cap \mathcal{E}|,\;|I\cap\mathcal{O}|\} > k(n)$ is~$o\left(2^{N/2}\right)$.
\end{corollary}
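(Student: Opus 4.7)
The plan is to combine the explicit lower-bound counting sketched in the paragraph immediately preceding the corollary with the matching asymptotic upper bound $i(Q_n) \sim 2\sqrt{e} \cdot 2^{N/2}$ provided by \cref{thm:number_is}. Concretely, that paragraph essentially establishes that for every fixed nonnegative integer $K$, the number of independent sets $I$ in $Q_n$ with $\min\{|I\cap \mathcal{E}|, |I\cap \mathcal{O}|\} \leq K$ is at least
\begin{equation*}
\left(2 \sum_{j=0}^{K} \frac{2^{-j}}{j!} - o_n(1)\right) \cdot 2^{N/2}.
\end{equation*}
Subtracting this from \cref{thm:number_is} immediately yields, for each fixed $K$,
\begin{equation*}
\#\bigl\{I : \min\{|I\cap \mathcal{E}|, |I\cap \mathcal{O}|\} > K\bigr\} \leq \left(2\sqrt{e} - 2 \sum_{j=0}^{K} \frac{2^{-j}}{j!} + o_n(1)\right) \cdot 2^{N/2}.
\end{equation*}

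To promote this from fixed $K$ to the super-constant function $k(n) \to \infty$ of the corollary, I would use a standard diagonalization. Fix any $\varepsilon > 0$. Using $\sum_{j \geq 0} 2^{-j}/j! = e^{1/2}$, choose a constant $K = K(\varepsilon)$ large enough that the leading constant $2\sqrt{e} - 2\sum_{j=0}^{K} 2^{-j}/j!$ is less than $\varepsilon/2$. Since $k(n) \to \infty$, for every sufficiently large $n$ we have $k(n) > K$, so the family we want to bound is contained in $\{I : \min > K\}$, and moreover the $o_n(1)$ term is less than $\varepsilon/2$. Thus the count is at most $\varepsilon \cdot 2^{N/2}$ for all large $n$, which is the desired $o(2^{N/2})$ bound.

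While writing up the argument I would want to verify two small points. First, the preceding paragraph's per-$j$ estimate $(1 - o_n(1)) \cdot \frac{2^{-j}}{j!} \cdot 2^{N/2}$ holds uniformly for $0 \leq j \leq K$ because both $\binom{N/2}{j} \sim (N/2)^j/j!$ and the ``at most $jn$ neighbors'' bound are sharp for fixed $j$. Second, summing over the $\mathcal{E}$ and $\mathcal{O}$ sides and doubling double-counts independent sets with both intersections $\leq K$; these contribute at most $\binom{N/2}{\leq K}^2 = O(N^{2K})$, polynomial in $N$ and hence negligible compared to $2^{N/2}$. I do not foresee a genuine obstacle: the argument works precisely because the upper bound of \cref{thm:number_is} is asymptotically tight, so the partial-sum lower bound and the matching upper bound can be combined to pin down a vanishing remainder.
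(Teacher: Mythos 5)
Your proposal is correct and follows the same route as the paper: the paper's ``proof'' of the corollary is precisely the informal counting argument in the paragraph immediately preceding it, combined with the matching upper bound from \cref{thm:number_is}, and the paper leaves the diagonalization from fixed~$K$ to $k(n)\to\infty$ and the overlap bookkeeping implicit. Your write-up merely makes those two small steps explicit (uniformity of the $(1-o_n(1))$ factor over finitely many $j\le K$, and the $O(N^{2K})$ bound on independent sets with both intersections $\le K$), which is sound and matches the intended argument.
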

\cref{lem:simplerbalancedupper} also follows formally from several works on the hardcore distributional model of the hypercube~\cite{kahn2001entropy, galvin2011threshold, jenssen2020independent} which with parameter~$\lambda = 1$ 
coincides with drawing a uniform independent set of~$Q_n$. 
For example, Theorem 1.4 clause 2 in~\cite{galvin2011threshold} proves that the distribution of~$\min\{|I\cap \mathcal{E}|,\;|I\cap\mathcal{O}|\}$ when~$I$ is a uniformly chosen independent set in~$Q_n$, converges to a Poisson distribution with parameter~$\frac{1}{2}$.
In this section we derive a simple \emph{tail bound} on the probability 
that both parities of a random independent set are of non-negligible size.
We note that a stronger estimate follows from the results 
of Jenssen and Perkins in 
\cite{jenssen2020independent}, and a nearly tight estimate 
of $2^{N/2-\Omega(N/\sqrt{n})}$ follows from  the proof of Park in
\cite{park2022note} (see equation (10) in her paper
and the two lines preceding it).
The bound below is weaker, but suffices (with room to spare) 
for our purpose here, and as 
its proof is simple and very different from the ones in the papers
above we include it.

\begin{theorem}\label{thm:balancedIS}
    For any constant~$\beta>0$, there are at most~$2^{N/2-\sqrt{N}/{2^{\Theta(\sqrt{n})}}}$ independent sets $I$ in~$Q_n$ for which~$\min\{|I\cap \mathcal{E}|,\;|I\cap\mathcal{O}|\} > \beta N.$
\end{theorem}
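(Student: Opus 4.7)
The approach is to first reduce the problem to a bipartite counting statement, and then extract the savings via a subcube-partitioning argument.

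Write $A := I \cap \mathcal{E}$ and $B := I \cap \mathcal{O}$. Independence of $I$ forces $N(A) \cap B = \emptyset$, and the balance hypothesis $|A|, |B| \geq \beta N$ therefore yields $|N(A)| \leq N/2 - |B| \leq (1/2 - \beta)N$. For each admissible $A$, the set $B$ is an arbitrary subset of $\mathcal{O} \setminus N(A)$ of size $\geq \beta N$, so
\[
|\{I : \min(|I \cap \mathcal{E}|, |I \cap \mathcal{O}|) > \beta N\}| \leq \sum_{\substack{A \subseteq \mathcal{E},\; |A| \geq \beta N \\ |N(A)| \leq (1/2 - \beta) N}} 2^{N/2 - |N(A)|}.
\]
The task thereby reduces to showing that this restricted sum is at most $2^{N/2 - \sqrt N / 2^{\Theta(\sqrt n)}}$. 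Note that the unrestricted sum equals the total number of independent sets, which is $\Theta(2^{N/2})$ by \cref{thm:number_is}, so we must genuinely save $\sqrt N/ 2^{\Theta(\sqrt n)}$ bits from the ``bottleneck'' structure imposed by the joint bounds on $|A|$ and $|N(A)|$.

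To locate these savings, I would fix a set of $k := \lceil \sqrt n \rceil$ coordinates and partition $\F_2^n$ into $2^{n-k}$ subcubes $C_y$ (indexed by $y \in \F_2^{n-k}$), each isomorphic to $Q_k$. The parity of $(y,x) \in Q_n$ equals $|y| + |x|$, so the $Q_n$-bipartition restricts, within each $C_y$, to the $Q_k$-bipartition (swapped when $|y|$ is odd). Heuristically, a ``typical'' independent set of $Q_n$ has one-sided traces $I \cap C_y$ in almost all subcubes: by the argument preceding \cref{lem:simplerbalancedupper} applied to each $Q_k$, only a strictly bounded-away-from-$1$ fraction of independent sets of $Q_k$ contain vertices of both parities. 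If nearly every trace $I \cap C_y$ were one-sided, the resulting assignment of ``signs'' $\sigma_y \in \{+,-\}$ to subcubes would essentially determine the $Q_n$-parities of the vertices of $I$, so a sign-discrepancy statistic of order at least $\Omega(\sqrt N / 2^{\sqrt n})$ must be present in order for both $|I \cap \mathcal E|, |I \cap \mathcal O|$ to exceed $\beta N$. Each ``defective'' (two-sided) subcube rules out at least half of the per-subcube choices, and $\Omega(\sqrt N/2^{\sqrt n})$ such defects aggregate to the claimed $2^{-\sqrt N/2^{\Theta(\sqrt n)}}$ savings.

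\textbf{Main obstacle.}
The most delicate step is turning the global inequality $|N(A)| \leq (1/2 - \beta) N$ into a quantitative \emph{local} statement — namely, a lower bound of $\Omega(\sqrt N / 2^{\sqrt n})$ on the number of defective subcubes, with a quantitative factor-$2$ savings per defect. The traces $I \cap C_y$ are not independent: adjacent subcubes are coupled by matchings along the remaining $n-k$ coordinate directions, so one cannot simply multiply per-subcube counts or apply a naive union bound over choices of defective subcube locations. Resolving this will likely require either a Sapozhenko-style container/encoding argument, representing each balanced $I$ by a list of per-subcube profiles plus correction bits for the cross-subcube matchings, or a martingale-type argument revealing subcube traces in a peeling order, using the deficit $|N(A)| \leq (1/2-\beta)N$ to drive the concentration at each step. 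I expect this coupling management to comprise the bulk of the technical work.
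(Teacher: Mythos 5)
Your high-level direction (partition $Q_n$ into subcubes along $\Theta(\sqrt n)$ coordinates, save a constant factor per ``defective'' two-sided subcube, aggregate to the claimed bound) is the same as the paper's, and your per-subcube savings calculation is correct in spirit: \cref{lem:simplerbalancedupper} does give a factor $1/2$ per subcube whose trace is forced to be two-sided. But the central claim of your plan --- that the global balance hypothesis forces $\Omega(\sqrt N/2^{\sqrt n})$ defective subcubes --- does not hold, and no amount of container or martingale machinery will rescue it as stated. For a \emph{fixed} coordinate set $K$, an independent set $I$ can have a one-sided trace in \emph{every} subcube $C_y$, with the trace living in $\mathcal{E}$ for half the subcubes and in $\mathcal{O}$ for the other half (e.g.\ $I = \mathcal{E}$ itself, whose trace in $C_y$ is the even or odd half of $C_y$ according to the parity of $y$); such $I$ satisfies $\min\{|I\cap\mathcal{E}|,|I\cap\mathcal{O}|\}$ as large as you like with zero defective subcubes. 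So the deficit $|N(A)|\le(1/2-\beta)N$ alone does not localize.

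The paper's resolution is not to fight the subcube coupling but to choose the coordinate set $K$ \emph{depending on $I$} so that the defect is total, and then simply drop the cross-subcube constraints (an overcount that suffices for an upper bound). Concretely: view $I\cap\mathcal{E}$ and $I\cap\mathcal{O}$ as set systems on $[n]$; the Pajor variant of the Sauer--Perles--Shelah lemma (\cref{lem:ssp}) shows each shatters $>\beta N$ coordinate sets; those two shattered-set families are down-closed, so FKG (\cref{lem:fkg}) gives a common shattered family of size $>\beta^2 N$; Chernoff (\cref{lem:chern}) then extracts a common shattered set $J$ of size $\approx n/2$. Fixing $K\subseteq J$ slightly smaller than $J$, every restriction $I^{(K')}$ has $\ge 2^{|J|-|K|}=\omega_n(1)$ vertices of each parity, so each one costs a factor $1/2$ by \cref{lem:simplerbalancedupper}, and multiplying over $2^{|K|}$ subcubes gives the savings $2^{-2^{|K|}}$. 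The dependence of $K$ on $I$ is handled by a union bound over the $\le 2^n$ choices of $K$. Your plan is missing precisely this Sauer--Shelah + FKG step that manufactures a $K$ along which \emph{all} traces are defective; everything after that in the paper is elementary, and in particular no encoding or concentration argument is needed to manage the cross-subcube coupling.
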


The proof is based on the simple estimate in
\cref{lem:simplerbalancedupper}, 
together with the recursive structure of the hypercube. These are combined
with a known result 
from the theory of VC-dimension, and the 
FKG Inequality, or its earlier versions due to Harris and Kleitman.

\begin{definition}
    We say that a subset~$J\subseteq[n]$ of the coordinates~$[n]=\{1,2,\ldots,n\}$ is \emph{shattered} by a family~$\mathcal{F}\subseteq \F_2^n$, if for every~$J'\subseteq J$ there exists~$F\in\mathcal{F}$ for which~$J'=J\cap F$.
\end{definition}
The following lemma is a variant of the Sauer-Perles-Shelah lemma
\cite{sauer1972density, shelah1972combinatorial}, due to Pajor
\cite{pajor1985sous}, see also \cite{anstee2002shattering}.
\begin{lemma}
\label{lem:ssp}
    Any family~$\mathcal{F}\subseteq \F_2^n$ shatters at least~$|\mathcal{F}|$ different subsets~$J\subseteq [n]$.
\end{lemma}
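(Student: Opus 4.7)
My plan is to prove the statement by induction on $n$, using the coordinate-splitting argument that underlies the standard proofs of the Sauer--Shelah--Perles lemma and its stronger shattering refinement due to Pajor. For the base case $n=0$ the claim is immediate, since $\F_2^0$ has a single element and a nonempty family trivially shatters the empty set $J=\emptyset$.

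For the inductive step, I would split $\mathcal{F}\subseteq \F_2^n$ according to the $n$-th coordinate into
\[\mathcal{F}_0:=\{F\in\mathcal{F}:n\notin F\},\qquad \mathcal{F}_1:=\{F\setminus\{n\}:F\in\mathcal{F},\ n\in F\},\]
viewing both as families in $\F_2^{n-1}$. By the inductive hypothesis, $\mathcal{F}_0$ shatters a collection $\mathcal{S}_0$ of at least $|\mathcal{F}_0|$ subsets of $[n-1]$, and likewise $\mathcal{F}_1$ shatters a collection $\mathcal{S}_1$ of at least $|\mathcal{F}_1|$ subsets of $[n-1]$. The plan is to exhibit enough shattered subsets of $[n]$ to account for $|\mathcal{F}_0|+|\mathcal{F}_1|=|\mathcal{F}|$.

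The key observations are: (i) every $J\in\mathcal{S}_0\cup\mathcal{S}_1$ is shattered by $\mathcal{F}$, since a witness in $\mathcal{F}_0$ or $\mathcal{F}_1$ lifts to a witness in $\mathcal{F}$ whose intersection with $J\subseteq[n-1]$ is unchanged; and (ii) if $J\in\mathcal{S}_0\cap \mathcal{S}_1$, then $J\cup\{n\}$ is also shattered by $\mathcal{F}$, because every $J'\subseteq J$ is realized as $F\cap J$ with $F\in\mathcal{F}_0$ (giving $F\cap(J\cup\{n\})=J'$) and every $J'\cup\{n\}$ is realized by lifting a witness in $\mathcal{F}_1$ back to $\mathcal{F}$. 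The shattered sets produced by (i) all miss $n$, while those in (ii) all contain $n$, so these two collections are disjoint, yielding
\[|\mathcal{S}|\ \geq\ |\mathcal{S}_0\cup\mathcal{S}_1|+|\mathcal{S}_0\cap\mathcal{S}_1|\ =\ |\mathcal{S}_0|+|\mathcal{S}_1|\ \geq\ |\mathcal{F}_0|+|\mathcal{F}_1|\ =\ |\mathcal{F}|.\]

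The only mildly delicate point, and the one I would pause to verify carefully, is observation (ii): ensuring that the witnesses used in $\mathcal{F}_0$ and $\mathcal{F}_1$ really combine to shatter $J\cup\{n\}$ rather than just $J$. Everything else is a bookkeeping exercise that falls out once the partition into $\mathcal{F}_0,\mathcal{F}_1$ is made and the induction is set up correctly.
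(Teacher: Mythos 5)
Your proof is correct. The paper itself does not give a proof of this lemma; it simply cites it as a known variant of the Sauer--Perles--Shelah lemma due to Pajor and points to the references. The coordinate-splitting induction you give is exactly the standard argument from those sources: splitting $\mathcal{F}$ on the $n$-th coordinate, applying the inductive hypothesis to $\mathcal{F}_0$ and $\mathcal{F}_1$, and then observing that each $J$ in $\mathcal{S}_0\cup\mathcal{S}_1$ is shattered by $\mathcal{F}$ while each $J$ in $\mathcal{S}_0\cap\mathcal{S}_1$ additionally contributes the shattered set $J\cup\{n\}$, giving $|\mathcal{S}_0\cup\mathcal{S}_1|+|\mathcal{S}_0\cap\mathcal{S}_1|=|\mathcal{S}_0|+|\mathcal{S}_1|\geq|\mathcal{F}_0|+|\mathcal{F}_1|=|\mathcal{F}|$. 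The ``delicate point'' you flagged in observation (ii) does go through: for $J''\subseteq J\cup\{n\}$ with $n\notin J''$ a witness from $\mathcal{F}_0$ works since it avoids $n$, and for $n\in J''$ a witness from $\mathcal{F}_1$ lifted by adding $n$ works, so $J\cup\{n\}$ is genuinely shattered, not merely $J$.
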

\begin{definition}
    We say that a family~$\mathcal{J}\subseteq \mathcal{P}\left([n]\right)$ is a \emph{down-closed family} if for any~$J\in \mathcal{J}$ and any~$J'\subseteq J$ we have that also~$J'\in \mathcal{J}$.
\end{definition}
The following is a special case of the Fortuin–Kasteleyn–Ginibre (FKG) 
inequality~\cite{fortuin1971correlation}, first proved by
Harris and by Kleitman~\cite{harris1960lower, kleitman1966families}.
\begin{lemma}[FKG]\label{lem:fkg}
    Let~$\mathcal{J},\mathcal{K}\subseteq\mathcal{P}\left([n]\right)$ be two down-closed families.
    Then,~$\frac{1}{N}|\mathcal{J}\cap \mathcal{K}|\geq \frac{1}{N}|\mathcal{J}| \cdot \frac{1}{N}|\mathcal{K}|$.
\end{lemma}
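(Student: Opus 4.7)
The plan is to turn the qualitative estimate \cref{lem:simplerbalancedupper} into a quantitative tail bound by decomposing $V(Q_n)$ into many small subcubes, forcing a large number of them to have restrictions whose internal parity is balanced, and then saving a constant factor for each such subcube by applying \cref{lem:simplerbalancedupper} inside $Q_k$. The tools \cref{lem:ssp} (Pajor's shattering lemma) and \cref{lem:fkg} (FKG / Harris--Kleitman) are used to aggregate the per-subcube savings into a global bound.

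Concretely, I would fix $k = c\sqrt n$ for a small constant $c > 0$ and decompose $V(Q_n)$ as a disjoint union of $m := 2^{n-k}$ subcubes $C_1,\ldots,C_m$ of dimension $k$, obtained by fixing the first $n-k$ coordinates. For any independent set $I \subseteq V(Q_n)$, each restriction $I_j := I \cap C_j$ is itself an independent set in $C_j \cong Q_k$, and the internal parity on $C_j$ matches the global parity (up to a flip determined by the parity of the fixing $\sigma_j$). For every balanced $I$ I would then identify a collection of $\Omega\!\left(\sqrt N / 2^{\Theta(\sqrt n)}\right)$ subcubes whose restrictions $I_j$ contain vertices of both internal parities. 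A quantitative version of \cref{lem:simplerbalancedupper} applied to $Q_k$ (where $k$ is large but fixed on the scale of that lemma) shows that, for each such balanced subcube, $I_j$ lies in a set of size at most $(1-\delta)\, i(Q_k)$ for some constant $\delta > 0$, yielding a multiplicative saving per balanced subcube.

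The delicate step is forcing the existence of so many balanced subcubes: global parity balance can in principle arise through cancellation between ``pure-parity'' subcubes without any single subcube being balanced. To rule this out I would apply \cref{lem:ssp} to the family of bad $I$'s (viewed as subsets of $V(Q_n)$) to show that any large family of bad $I$'s must shatter many independent sets well-spread across the subcube partition, and \cref{lem:fkg} on monotone events of the form ``$I$ contains vertex $v$'' to control the joint marginals of the restrictions. Combined with the hard constraint $|I_j| \leq 2^{k-1}$ per subcube and the global balance hypothesis $|I\cap \mathcal E|, |I \cap \mathcal O| > \beta N$, this yields the required count of balanced subcubes.

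The main obstacle lies in the aggregation step: restrictions to different subcubes are not jointly independent, since edges of $Q_n$ cross subcubes whenever we flip one of the fixed coordinates, so the per-subcube savings do not straightforwardly multiply. \cref{lem:fkg} is the main tool for handling these dependencies, giving a product-type lower bound on the joint probability of balanced-subcube events and thereby translating the $\Omega(\sqrt N / 2^{\Theta(\sqrt n)})$ subcube-savings into the target global bound $2^{N/2 - \sqrt N / 2^{\Theta(\sqrt n)}}$. Calibrating the constant $c$ in $k = c\sqrt n$ so that the savings per subcube dominate the correlation loss is what ultimately produces the stated exponent.
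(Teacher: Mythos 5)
Your proposal does not address the statement it is supposed to prove. The statement is the Harris--Kleitman/FKG correlation inequality itself: for two down-closed families $\mathcal{J},\mathcal{K}\subseteq\mathcal{P}([n])$ one has $\frac{1}{N}|\mathcal{J}\cap\mathcal{K}|\geq\frac{1}{N}|\mathcal{J}|\cdot\frac{1}{N}|\mathcal{K}|$. What you have written instead is a proof sketch for the tail bound on parity-balanced independent sets in $Q_n$ (the theorem giving the count $2^{N/2-\sqrt{N}/2^{\Theta(\sqrt{n})}}$), and in that sketch you explicitly invoke the FKG lemma as a tool. As an argument for the lemma in question this is circular -- you cannot use \cref{lem:fkg} to establish \cref{lem:fkg} -- and as written it simply proves (or rather, outlines) a different result. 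None of the subcube decomposition, the shattering argument, or the savings-per-balanced-subcube bookkeeping has any bearing on the correlation inequality for down-closed families.

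For the record, the paper does not prove this lemma either; it cites it as a special case of the FKG inequality, first proved by Harris and by Kleitman. If a self-contained proof is wanted, the standard route is induction on $n$: write $\mathcal{J}=\mathcal{J}_0\cup\mathcal{J}_1$ according to whether a set contains the element $n$, note that $\mathcal{J}_1\subseteq\mathcal{J}_0$ (after deleting $n$) because $\mathcal{J}$ is down-closed, and similarly for $\mathcal{K}$; apply the induction hypothesis to the four restricted families on $[n-1]$ and combine the resulting inequalities using the elementary fact that $(a_0-a_1)(b_0-b_1)\geq 0$ when $a_0\geq a_1$ and $b_0\geq b_1$ (equivalently, Chebyshev's sum inequality for two monotone sequences of length two). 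That two-paragraph induction is entirely elementary and is what a blind proof of this statement should contain.
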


We also use the following simple application of the Chernoff–Hoeffding inequality~\cite{hoeffding1994probability}.
\begin{lemma}\label{lem:chern}
    Let~$\mathcal{J}\subseteq \mathcal{P}\left([n]\right)$ be a family of size~$|\mathcal{J}|>\gamma N$, then there exists~$J\in\mathcal{J}$ of size~$|J|>\frac{1}{2}n-\sqrt{\frac{1}{2} \ln \left(1/\gamma\right)}\cdot \sqrt{n}$.
\end{lemma}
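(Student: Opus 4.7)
The plan is to argue by a straightforward counting/contrapositive argument based on a standard Chernoff--Hoeffding tail bound for the binomial distribution. Concretely, I would contrapose: set $t := \sqrt{\tfrac{1}{2}\ln(1/\gamma)}\cdot\sqrt{n}$, and show that the number of subsets $J\subseteq[n]$ with $|J|\leq \tfrac{n}{2}-t$ is at most $\gamma N$. Since $|\mathcal{J}|>\gamma N$, the family $\mathcal{J}$ cannot be entirely contained in this collection of ``small'' subsets, so some $J\in\mathcal{J}$ must satisfy $|J|>\tfrac{n}{2}-t$, which is precisely the desired conclusion.

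To carry out the counting step, I would let $\bJ$ be a uniformly random subset of $[n]$, equivalently $|\bJ|=\sum_{i=1}^n \bX_i$ where $\bX_1,\ldots,\bX_n$ are independent $\{0,1\}$-valued random variables with mean $1/2$. Hoeffding's inequality for bounded independent summands gives
\[
\Prx\!\sbra{\,|\bJ|\leq \tfrac{n}{2}-a\,}\leq \exp\!\pbra{-\tfrac{2a^2}{n}}
\]
for every $a>0$. Substituting $a=t\sqrt n$ with $t=\sqrt{\tfrac{1}{2}\ln(1/\gamma)}\cdot\sqrt{n}$ simplifies the exponent to $-\ln(1/\gamma)$, so
\[
\Prx\!\sbra{\,|\bJ|\leq \tfrac{n}{2}-t\,}\leq \gamma.
\]
Multiplying both sides by the total number $N=2^n$ of subsets of $[n]$ bounds the number of subsets $J$ with $|J|\leq \tfrac{n}{2}-t$ by $\gamma N$, completing the argument.

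There is essentially no obstacle here: the only thing to verify is that the exponent in Hoeffding's bound lines up to give exactly the constant $\sqrt{1/2}$ in front of $\sqrt{\ln(1/\gamma)}\cdot \sqrt n$, and this is immediate from the $-2a^2/n$ form of the one-sided Hoeffding bound for a $\mathrm{Binomial}(n,1/2)$ random variable. (Equivalently, one could quote the well-known fact that $\sum_{i\leq n/2-t\sqrt n}\binom{n}{i}\leq 2^n e^{-2t^2}$.) The resulting bound is stated with a strict inequality ``$|J|>\tfrac{n}{2}-t$'', which is also fine, since the set of $J$ violating this strict inequality has size at most $\gamma N<|\mathcal{J}|$.
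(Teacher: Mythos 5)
Your proof is correct and follows essentially the same route as the paper: a Chernoff--Hoeffding tail bound for the weight of a uniformly random subset of $[n]$, converted into a count of at most $\gamma N$ subsets of size at most $\tfrac{n}{2}-\sqrt{\tfrac{1}{2}\ln(1/\gamma)}\cdot\sqrt{n}$, which cannot contain all of $\mathcal{J}$. The only blemish is the notational slip ``$a=t\sqrt{n}$'' (with your $t$ already containing the $\sqrt{n}$ factor, the correct substitution is $a=t$), but the exponent you actually compute, $-\ln(1/\gamma)$, is the right one, so the argument stands.
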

\begin{proof}
    Let~$x$ be random vector drawn uniformly from~$\F_2^n$.
    By the Chernoff–Hoeffding inequality, the probability that the Hamming weight of~$x$ is at most~$n-x\sqrt{n}$ is smaller than~$e^{-2x^2}$.
    In particular, there are less than~$\gamma N$ different vectors in~$\F_2^n$ of weight at most~$\frac{1}{2}n-\sqrt{\frac{1}{2} \ln \left(1/\gamma\right)}\cdot \sqrt{n}$.
    The desired result follows by identifying each subset 
of~$[n]$ with its corresponding characteristic vector.
\end{proof}

We are now ready to prove \cref{thm:balancedIS}.
\begin{proof}[Proof of \cref{thm:balancedIS}]
    Let~$I$ be an independent set in~$Q_n$ for which $\min\{|I\cap \mathcal{E}|,\;|I\cap\mathcal{O}|\} > \beta N$.
    For any vertex~$x\in \F_2^n$ we define~$S_x := \{i\in [n] \ | \ x_i=1\}\subseteq[n]$ to be the set of coordinates~$i$ in which~$x_i=1$.
    Denote by~$\mathcal{F}_\mathcal{E} := \{S_x \ | \ x\in I \cap \mathcal{E}\}$, and respectively~$\mathcal{F}_\mathcal{O} := \{S_x \ | \ x\in I \cap \mathcal{O}\}$.
    By our assumption, we have~$|\mathcal{F}_\mathcal{E}|,|\mathcal{F}_\mathcal{O}| > \beta N$.
    Denote by~$\mathcal{J}_\mathcal{E}\subseteq \mathcal{P}\left([n]\right)$ (respectively,~$\mathcal{J}_\mathcal{O}$) the family of all subsets~$J\subseteq[n]$ of coordinates such that~$\mathcal{F}_\mathcal{E}$ (respectively,~$\mathcal{F}_\mathcal{O}$) shatters~$J$.
    By \cref{lem:ssp},~$|\mathcal{J}_\mathcal{E}|, |\mathcal{J}_\mathcal{O}| > \beta N$.
    We note that if a family shatters~$J$ then it also shatters any~$J'\subseteq J$, and hence~$\mathcal{J}_\mathcal{E},\mathcal{J}_\mathcal{O}$ are down-closed families.
    Denote by~$\mathcal{J}:= \mathcal{J}_\mathcal{E}\cap \mathcal{J}_\mathcal{O}$ the family of all subsets of coordinates shattered by both~$\mathcal{F}_\mathcal{E}$ and~$\mathcal{F}_\mathcal{O}$, by \cref{lem:fkg} we thus have~$|\mathcal{J}| \geq \beta^2 N$.
    Using \cref{lem:chern}, we conclude there exists a set of coordinates~$J\in \mathcal{J}$ that is shattered by both~$\mathcal{F}_\mathcal{E}$ and~$\mathcal{F}_\mathcal{O}$, of size~$|J|>\frac{1}{2}n-\sqrt{\ln \left(1/\beta\right)}\cdot \sqrt{n}$.
    Let~$K\subseteq J$ be an arbitrary subset of~$J$ of size, say,
	$|K|=\lceil \frac{1}{2}n-2\sqrt{\ln \left(1/\beta\right)} \cdot \sqrt{n} \rceil$.
    For any~$K'\subseteq K$, denote by~$I^{(K')}$ the subset of~$I$ containing only the vertices in which the coordinates of~$K$ exactly correspond to~$K'$, that is~$I^{(K')} := \{x\in I \ | \ S_x \cap K = K'\}$.
    We observe that~$I^{(K')}$ is an independent set in the~$\left(n-|K|\right)$-th dimensional hypercube defined by the same restriction~$\{x\in \F_2^n \ | \ S_x \cap K = K'\}\cong \F_2^{n-|K|}$.
    As~$J\supset K$ is shattered by both~$\mathcal{F}_\mathcal{E}$ and~$\mathcal{F}_\mathcal{O}$, we conclude that there are at least~$2^{|J|-|K|}$ vertices in~$I^{(K')}$ of each parity.
    In particular,~$I^{(K')}$ is an independent set in an~$\left(n-|K|\right)$-th dimensional hypercube with at least~$2^{|J|-|K|} = 2^{\Omega\left(\sqrt{\ln \left(1/\beta\right)}\cdot \sqrt{n}\right)}=\omega_n(1)$ vertices of each parity.
    By \cref{lem:simplerbalancedupper}, for a large enough~$n$ there are at most~$\frac{1}{2} \cdot 2^{\left(2^{n-|K|}\right)/2}$ possible such independent sets~$I^{(K')}$.
    Note that this family of possible independent sets only depends on~$n,\beta,K,K'$ and not on anything else (such as~$I$ itself or even~$J$).
    As~$I=\bigcup_{K'\subseteq K} I^{(K')}$, we may repeat the argument above for every~$K'$ and conclude that there are at most~$\left(\frac{1}{2} \cdot 2^{\left(2^{n-|K|}\right)/2}\right)^{2^{|K|}} = 2^{-2^{|K|}} \cdot 2^{N/2}$ possible such independent sets~$I$, and that family of possible sets only depends on~$n,\beta,K$.
    By the pigeonhole principle, at least a~$2^{-n}$ fraction of the independent sets~$I$ in~$Q_n$ for which $\min\{|I\cap \mathcal{E}|,\;|I\cap\mathcal{O}|\} > \beta N$ will result in the same subset~$K\subseteq[N]$ in the above argument.
    Thus, the number of such independent sets is no more than~$2^n \cdot 2^{-2^{|K|}} \cdot 2^{N/2}$.
    We finally note that for any constant~$\beta>0$,
    $$
    2^n \cdot 2^{-2^{|K|}} < 
    2^n \cdot 2^{-2^{n/2-\sqrt{\ln \left(1/\beta\right)} \cdot \sqrt{n} / 2}} = 
    2^{-\sqrt{N}/{2^{\Theta(\sqrt{n})}}}
    .$$
\end{proof}

As mentioned before Theorem \cref{thm:balancedIS}, 
Park~\cite{park2022note} proved recently that there are 
asymptotically~$2^{\left(1-\Theta\left(1/{\sqrt{n}}\right)\right)N/2}$ 
independent sets~$I$ of~$Q_n$ in 
which~$|I\cap\mathcal{E}|=|I\cap\mathcal{O}|$.
Therefore, even when~$\beta>0$ is a constant, the bound
in \cref{thm:balancedIS} cannot be improved to anything 
below~$2^{N/2 - N/\Theta\left(\sqrt{n}\right)}$, and is thus 
inherently of the form~$2^{N/2 - o(N)}$.

\section{Upper Bound}\label{sec:upper}
In this section, we are finally ready to prove the upper bound on~$|\mathcal{S}_n|$.
\begin{theorem}\label{thm:upper}
    $|\mathcal{S}_n\setminus \mathcal{H}_n| = 2^{-\sqrt{N}/{2^{\Theta(\sqrt{n})}}}\cdot |\mathcal{S}_n| = o\left(|\mathcal{S}_n|\right)$.
\end{theorem}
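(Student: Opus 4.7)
The plan is to split $\mathcal{S}_n \setminus \mathcal{H}_n$ according to the dichotomy in \cref{prop:sumset-count}: either (Case~1) $\F_2^n \setminus S$ fails to span, so that $S = S' \cup (\F_2^n \setminus v^\perp)$ with $S' \subseteq v^\perp$ for some nonzero $v$; or (Case~2) $\F_2^n \setminus S$ has full rank, and $A$ is an independent set in $Q_n$ after some non-singular linear change of coordinates $T$ (with at most $2^{n^2}$ choices of $T$).

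For Case~1, writing $A = A_0 \cup A_1$ according to whether its elements lie in $v^\perp$ or $\F_2^n \setminus v^\perp$, one has $S' = (A_0 + A_0) \cup (A_1 + A_1)$, which after translating $A_1$ by any fixed vector of $\F_2^n \setminus v^\perp$ becomes a union of at most two sumsets in $v^\perp \cong \F_2^{n-1}$. I would then observe that $S \in \mathcal{H}_n$ via some $w \neq v$ is equivalent to $w^\perp \cap v^\perp \subseteq S'$, and that $\{w^\perp \cap v^\perp : w \neq 0, v\}$ is exactly the collection of codimension-$1$ subspaces of $v^\perp$; hence $S \in \mathcal{S}_n \setminus \mathcal{H}_n$ (excluding the trivial case $S = \F_2^n$) forces $S'$, viewed inside $\F_2^{n-1}$, to be a $2$-union of sumsets that lies outside $\mathcal{H}_{n-1}$. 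Applying \cref{thm:unions} (for instance with $s = 2^{n-1}/20$) bounds the number of such $S'$ per $v$ by $2^{N/2 - \Omega(N)}$; summing over the $(2^n-1)$ choices of $v$ gives a Case~1 contribution of at most $2^{-\Omega(N)} \cdot |\mathcal{S}_n|$.

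For Case~2, I would split further according to the parity-balance of $T^{-1}(A)$ in $Q_n$. In the \emph{balanced} sub-case, $\min(|T^{-1}(A) \cap \mathcal{E}|, |T^{-1}(A) \cap \mathcal{O}|) > \beta N$ for a small constant $\beta > 0$, and \cref{thm:balancedIS} gives at most $2^{N/2 - \sqrt{N}/2^{\Theta(\sqrt n)}}$ such sets; using $n^2 \ll \sqrt{N}/2^{\Theta(\sqrt n)}$, summing over all $2^{n^2}$ transformations and comparing with $|\mathcal{S}_n| \sim (2^n-1) 2^{N/2}$ yields precisely the target bound $2^{-\sqrt{N}/2^{\Theta(\sqrt n)}} \cdot |\mathcal{S}_n|$. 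In the \emph{unbalanced} sub-case, assume without loss of generality $|T^{-1}(A) \cap \mathcal{O}| \leq \beta N$ and set $W := T(\mathcal{E})$, a codimension-$1$ subspace of $\F_2^n$. Since $(A \cap W) + (A \cap W) \subseteq S \cap W$, whenever $(A \cap W) + (A \cap W) = W$ we obtain $W = v^\perp \subseteq S$ and hence $S \in \mathcal{H}_n$; by \cref{lem:mostA} applied inside $W \cong \F_2^{n-1}$, this happens for all but at most $2^{n-1} \cdot 3^{2^{n-2}}$ choices of $A \cap W$. Multiplying these exceptional choices by the $\leq 2^{H(\beta) N/2}$ choices of $A \setminus W$ and by $2^{n^2}$ transformations gives (for $\beta$ small enough) a total unbalanced contribution of at most $2^{-\Omega(N)} \cdot |\mathcal{S}_n|$.

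Summing the three contributions yields $|\mathcal{S}_n \setminus \mathcal{H}_n| \leq 2^{-\sqrt{N}/2^{\Theta(\sqrt n)}} \cdot |\mathcal{S}_n|$, as required. The main obstacle is precisely the balanced sub-case of Case~2: it alone determines the final $2^{-\sqrt{N}/2^{\Theta(\sqrt n)}}$ rate, since the other two contributions are only of order $2^{-\Omega(N)}$ and fall out cleanly from \cref{thm:unions} and \cref{lem:mostA}; the balanced sub-case instead requires both the sharp parity-balance tail bound of \cref{thm:balancedIS} and the observation that the $2^{n^2}$ linear-transformation factor is negligible against it.
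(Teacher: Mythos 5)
Your proof is correct and follows essentially the same route as the paper's, with the same two key ingredients: \cref{thm:unions} (packaged in the paper as \cref{lem:hnprime}) for the case where $S^c$ does not span, and \cref{thm:balancedIS} for the balanced independent sets in the full-rank case. The only real difference is in the unbalanced sub-case of Case~2: where the paper fixes a missing even-weight vector $v$, bounds independent sets in $Q_n \cup \Gamma_{\F_2^n}(v)$ (\cref{lem:addedge}), and takes a union bound over the $2^{n-1}$ choices of $v$, you instead observe directly that $S\notin\mathcal{H}_n$ forces $(A'\cap\mathcal{E})+(A'\cap\mathcal{E})\neq\mathcal{E}$ and apply \cref{lem:mostA} inside $\mathcal{E}\cong\F_2^{n-1}$ --- a mild reorganization resting on the same perfect-matching count. (One small slip: the number of choices for $A\setminus W$ with $|A\setminus W|\leq\beta N$ from the $N/2$ elements of $\F_2^n\setminus W$ is $2^{H(2\beta)N/2+O(n)}$, not $2^{H(\beta)N/2}$, though this is harmless once $\beta$ is taken small enough.)
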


We first use \cref{sec:unions} to prove the following lemma.
\begin{definition}
Denote by $\mathcal{H'}_n := \{S\subseteq \F_2^n \ | \ \exists\; v \in \F_2^n \ . \ \left(\F_2^n \setminus v^{\perp}\right) \subseteq S \}$ the family of all subsets of~$\F_2^n$ that contain the full complement of co-dimension~$1$ linear subspace. 
\end{definition}
\begin{lemma}\label{lem:hnprime}
	$|\mathcal{S}_n\cap \mathcal{H}'_n| = 2^{N/2 - \Omega(N)}$.
\end{lemma}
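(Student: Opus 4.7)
The plan is to fix the vector $v$ witnessing membership in $\mathcal{H}'_n$, split $A$ according to the hyperplane $v^\perp$, and then show that the portion of $S$ lying inside $v^\perp$ is forced to be a union of two sumsets in a space of half the size, at which point Theorem \ref{thm:unions} applied to $\F_2^{n-1}$ finishes the job.

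More precisely, suppose $S = A+A \in \mathcal{H}'_n$, and let $v \in \F_2^n$ be a vector with $\F_2^n \setminus v^\perp \subseteq S$. Write $A_0 := A \cap v^\perp$ and $A_1 := A \cap (\F_2^n \setminus v^\perp)$. Since $\langle a+b, v\rangle = \langle a,v\rangle + \langle b,v\rangle$, we immediately get
\[
    S \cap v^\perp = (A_0+A_0) \cup (A_1+A_1), \qquad S \cap (\F_2^n \setminus v^\perp) = A_0 + A_1.
\]
Fix any $u \in \F_2^n \setminus v^\perp$ and set $A'_1 := A_1 + u \subseteq v^\perp$; shifting does not change the sumset, so $A_1+A_1 = A'_1 + A'_1$. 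Thus $S \cap v^\perp$ is a union of two sumsets in the group $v^\perp \cong \F_2^{n-1}$.

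Now I would fix $v$ and count the number of possible sets $S$ of this form. The part $S \cap (\F_2^n \setminus v^\perp)$ equals the entire coset $\F_2^n \setminus v^\perp$ by the assumption $S \in \mathcal{H}'_n$, so $S$ is determined by $v$ together with $S \cap v^\perp$. Applying Theorem \ref{thm:unions} to the group $v^\perp$ of order $N/2$ with, say, $s = (N/2)/20$ and $k=2$, the number of subsets of $v^\perp$ expressible as a union of (at most) two sumsets is at most $2^{(N/2)(1-c)}$ for an absolute constant $c>0$, i.e.\ $2^{N/2 - \Omega(N)}$. Taking a union bound over the $2^n - 1$ possible choices of $v$ yields
\[
    |\mathcal{S}_n \cap \mathcal{H}'_n| \;\leq\; 2^n \cdot 2^{N/2-\Omega(N)} \;=\; 2^{N/2 - \Omega(N)},
\]
since $n = O(\log N)$ is absorbed into the $\Omega(N)$ term.

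The only real subtlety is checking that Theorem \ref{thm:unions} is applicable with $k=2$ in the group $\F_2^{n-1}$ of order $N/2$: one must verify that the threshold $s \geq 64\log^2(N/2)$ is compatible with $s = \Theta(N)$ and with $\lfloor (N/2)/(2s\ln(e(N/2)/s))\rfloor \geq 2$, which holds for all sufficiently large $n$ and any $s$ of the form $s = \alpha N$ with $\alpha$ a small enough constant. Once the constants line up, everything is bookkeeping; there is no substantial obstacle beyond the decomposition idea itself.
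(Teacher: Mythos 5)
Your proof is correct and follows essentially the same path as the paper's: decompose $A$ into $A_0 = A\cap v^\perp$ and $A_1 = A\setminus v^\perp$, observe that $S\cap v^\perp = (A_0+A_0)\cup(A_1+A_1)$ is a union of two sumsets in a group isomorphic to $\F_2^{n-1}$, apply \cref{thm:unions}, and take a union bound over $v$. You are slightly more explicit than the paper about the shift of $A_1$ into $v^\perp$ and about checking that the parameters of \cref{thm:unions} line up, but these are details the paper leaves implicit rather than a genuine difference in approach.
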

\begin{proof}
    Let~$S\in \mathcal{S}_n\cap \mathcal{H}_n'$.
    As~$S\in \mathcal{S}_n$, there exists some~$A\subseteq \F_2^n$ for which~$S=A+A$.
    As~$S\in \mathcal{H}'_n$, there exists some~$v\in \F_2^n$ such that~$\left(\F_2^n \setminus v^{\perp}\right) \subseteq S$.
    Denote by~$A_0 = A \cap v^\perp$, and by~$A_1 = A\setminus v^\perp$.
    We have~$A=A_0 \cup A_1$ and moreover,~$(A+A)\cap v^\perp = (A_0+A_0)\cup (A_1+A_1)$.
    We note that~$\F_2^n \cap v^\perp \cong \F_2^{n-1}$ is isomorphic to a~$(n-1)$-th dimensional hypercube. By \cref{thm:unions} the number 
	of possible unions of two sumsets in~$\F_2^{n-1}$ 
	is~$2^{2^{n-1}-\Omega\left(2^{n-1}\right)}$.
    As~$S$ is fully described by~$v$ and by such a union, we conlcude that the number of possible sets~$S$ is at most
    $$
    2^n \cdot 2^{2^{n-1}-\Omega({2^{n-1}})} =
	2^{N/2 - \Omega(N) }
    .$$
\end{proof}

We next use \cref{sec:balance} to prove the following lemma.
\begin{lemma}\label{lem:addedge}
    Let~$v\in \F_2^n$ be a vector of even Hamming weight.
    The number of independent sets in~$Q_n \cup \Gamma_{\F_2^n}(v)$ is at most~$2^{N/2-\sqrt{N}/{2^{\Theta(\sqrt{n})}}}$.
\end{lemma}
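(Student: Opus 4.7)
The plan is to exploit the parity structure of $Q_n$ together with the fact that $v$ has even Hamming weight. Since adding $v$ preserves parity, every edge of the Cayley sum graph $\Gamma_{\F_2^n}(v)$ connects two vertices of the same parity. Thus $\Gamma_{\F_2^n}(v)$ decomposes as the disjoint union of a perfect matching $M_{\mathcal{E}}$ on $\mathcal{E}$ (with $N/4$ edges) and a perfect matching $M_{\mathcal{O}}$ on $\mathcal{O}$ (also with $N/4$ edges). Consequently, every independent set $I$ in $Q_n \cup \Gamma_{\F_2^n}(v)$ is an independent set in $Q_n$ whose restriction to each parity class is additionally an independent set in the corresponding perfect matching.

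First I would split the count based on how balanced $I$ is across $\mathcal{E}$ and $\mathcal{O}$. Fix a small constant $\beta>0$ to be tuned later.

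\textbf{Balanced case:} If $\min\{|I\cap\mathcal{E}|,\,|I\cap\mathcal{O}|\} > \beta N$, then, since $I$ is in particular an independent set in $Q_n$, \cref{thm:balancedIS} directly bounds the number of such $I$ by $2^{N/2 - \sqrt{N}/2^{\Theta(\sqrt{n})}}$. Here the extra matching constraint can only decrease the count, so it is enough.

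\textbf{Unbalanced case:} If, say, $|I\cap\mathcal{O}| \leq \beta N$, then I would count separately the possibilities on each parity class. On the even side, $I\cap\mathcal{E}$ is an independent set in the matching $M_{\mathcal{E}}$ on $N/2$ vertices with $N/4$ edges, so there are at most $3^{N/4}$ choices. On the odd side, $I\cap\mathcal{O}$ is any subset of $\mathcal{O}$ of size at most $\beta N$, contributing at most $\binom{N/2}{\leq \beta N} \leq 2^{H(2\beta)\cdot N/2}$ possibilities. The symmetric case $|I\cap\mathcal{E}|\leq \beta N$ is bounded identically, so the unbalanced case contributes at most $2\cdot 3^{N/4}\cdot 2^{H(2\beta)\,N/2}$ independent sets.

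Choosing $\beta$ small enough so that $\tfrac{1}{4}\log_2 3 + \tfrac{1}{2}H(2\beta) < \tfrac{1}{2} - \eta$ for some fixed $\eta > 0$ (any $\beta$ with $H(2\beta) < 1 - \tfrac{1}{2}\log_2 3$ works, and this is well-defined since $\tfrac{1}{4}\log_2 3 \approx 0.396 < 1/2$), the unbalanced contribution is at most $2^{(1/2-\eta)N}$, which is far smaller than the balanced bound $2^{N/2 - \sqrt{N}/2^{\Theta(\sqrt{n})}}$. Summing the two cases yields the claimed bound. I do not anticipate a real obstacle here: the key structural observation (even-weight $v$ means the extra edges respect the bipartition) is what makes the theorem work, and once that is in hand the bound from \cref{thm:balancedIS} dominates the whole count, with the unbalanced cases trivially smaller because a matching has only $3^{N/4} = 2^{0.396N}$ independent sets.
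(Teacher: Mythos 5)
Your proposal is correct and matches the paper's proof essentially step for step: both split on whether $\min\{|I\cap\mathcal{E}|,|I\cap\mathcal{O}|\}$ exceeds a small constant fraction of $N$, invoke \cref{thm:balancedIS} for the balanced case, and for the unbalanced case exploit that $\Gamma_{\F_2^n}(v)$ restricted to either parity class is a perfect matching (giving a $3^{N/4}$ factor) together with a binomial bound on the small side. The only cosmetic difference is that the paper fixes $\beta = 1/200$ rather than leaving $\beta$ as a tunable parameter.
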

\begin{proof}
By \cref{thm:balancedIS} there are at most~$2^{N/2-\sqrt{N}/{2^{\Theta(\sqrt{n})}}}$ independent sets~$I$ in~$Q_n$ for which~$\min\{|I\cap \mathcal{E}|,\;|I\cap\mathcal{O}|\} > \frac{1}{200} N$.
On the other hand, the induced subgraph of~$\Gamma_{\F_2^n}(v)$ on~$\mathcal{E}$ (or~$\mathcal{O}$) is a perfect matching, and hence it contains only~$3^{N/4}$ independent sets.
Hence, the number of independent sets~$I$ of~$Q_n \cup \Gamma_{\F_2^n}(v)$ in which~$\min\{|I\cap \mathcal{E}|,\;|I\cap\mathcal{O}|\} \leq \frac{1}{200} N$ is at most~$$2\cdot 3^{N/4} \cdot 2^{H(1/100)\cdot N/2 + \Theta(n)}=O\left(1.95^{N/2}\right).$$
\end{proof}
\begin{remark}
    A bound of the form~$2^{N/2-o(N)}$ is the best possible in \cref{lem:addedge}. 
    Assume~$n$ is even and consider the even-weight edge~$v:=\vec{1}=(1,1,\ldots,1)$.
    The set of all even-weight vectors of weight~$<n/2$ and all odd-weight vectors of weight~$>n/2$ is of size~$\left(1-\Theta\left(\frac{1}{\sqrt{n}}\right)\right)\frac{N}{2}$ and is an independent set in~$Q_n \cup \Gamma_{\F_2^n}(v)$ (hence so is any subset of it).
\end{remark}

\begin{proof}[Proof of \cref{thm:upper}]
    Let~$S\in \mathcal{S}_n\setminus \mathcal{H}_n$.
    If~$S^c$ is not of full linear rank, then~$S\in \mathcal{H}'_n$.
    By \cref{lem:hnprime} the number of such sumsets~$S$ is small enough and thus we may assume that there exists a linear basis~$v_1,\ldots,v_n$ in~$S^c$.
    We apply the linear transformation mapping this basis to the standard one, and then observe using \cref{lem:cayley} that the set~$A$ for which~$S=A+A$ must be an independent set in~$Q_n$ (after the linear transformation, which we from now on work under the application of).
    For any even-weight edge~$v$, we know from \cref{lem:addedge} that there are at most~$2^{N/2-\sqrt{N}/{2^{\Theta(\sqrt{n})}}}$ sets~$A$ that are both an independent set of~$Q_n$ and also have~$v\notin (A+A)$ (equivalently,~$A$ is also an independent set in~$\Gamma_{\F_2^n}(v)$).
    By a union bound, there are only~$2^{n-1} \cdot 2^{N/2-\sqrt{N}/{2^{\Theta(\sqrt{n})}}}$ sets~$A$ that are both an independent set in~$Q_n$ and also have any even-weight edge~$v$ not in~$A+A$.
    If~$A$ is not of this form, then the linear subspace~$\left(\vec{1}\right)^\perp$ of all even-weight vectors is contained in~$A+A=S$, and thus~$S\in \mathcal{H}_n$ which is a contradiction. 
    We conclude that~$S$ can be described by a linear transformation, of which there are at most~$2^{n^2}$, and either a set in~$\mathcal{H}'_n$ or one of the~$2^{n-1} \cdot 2^{N/2-\sqrt{N}/{2^{\Theta(\sqrt{n})}}}$ sets described above.
    The overall number of possible sets~$S$ is thus~$2^{N/2-\sqrt{N}/{2^{\Theta(\sqrt{n})}}}$.
\end{proof}

The lower bound in \cref{thm:lower} and the upper bound in 
\cref{thm:upper} together conclude the proof of \cref{thm:nearlyidentical}.
In combination with \cref{thm:sizeh}, we also deduce \cref{thm:asymptotic}.

\section{Expressing random sets as unions of sumsets}

\cref{thm:unions} provides a lower bound on the minimum 
number of sumsets required to express a random subset of a finite
abelian group as their union.
We can in fact
improve the statement of this theorem and prove a nearly tight result,
as we show in this section. Throughout the section
we consider general finite abelian groups.
The arguments can be easily extended to the non-abelian case,
but to simplify the presentation 
we restrict the discussion to abelian groups.
It is convenient to adopt here the convention that in a sumset
$A+A$ we only include the sums of distinct elements of $A$. 
\begin{theorem}
	\label{tunion}
	Let $G$ be an abelian group of order $N$ and let $S$ be a random
	subset of $G$. Then
	\begin{itemize}
		\item
			With high probability $S$ cannot be expressed as a union
			of less than $\Omega( N/(\log^2 N \log \log N))$
			sumsets.
		\item
			With high probability $S$ is a union of at most
			$O( N \log \log N/(\log^2 N))$ sumsets.
	\end{itemize}
\end{theorem}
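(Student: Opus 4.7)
The plan is to sharpen both sides of the analysis in \cref{thm:unions}: the lower bound refines the combinatorial counting that appears there, while the upper bound is a probabilistic two-phase covering construction.

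For the lower bound, I would begin by applying \cref{alem:manysumsIS} with $s = C \log^2 N$ (for a sufficiently large constant $C$) to reduce to the event, which holds with overwhelming probability, that no sumset $A+A \subseteq S$ has $|A| \geq s$. The slack in \cref{thm:unions} is the naive count $(eN/s)^{sk} = 2^{O(ks\log N)}$ of $k$-tuples of generators $A_i$. The improvement by a factor of $\log N / \log \log N$ should come from enumerating \emph{distinct sumsets} rather than generators: each sumset $A+A$ with $|A| \leq s$ is specified by a small many-sums core $A' \subseteq A$ of size $\Theta(\sqrt{|A|}) = \Theta(\log N)$ from \cref{alem:manysums}, and the rest of $A+A$ is then encoded using only $O(|A|\log\log N)$ additional bits via an additive-structure argument on the relative complement. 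This brings the total count of candidate unions down to $2^{O(ks\log\log N)}$; balancing this against the $2^{N - s/8}$ tail term from \cref{alem:manysumsIS}, the largest $k$ for which the number of admissible $S$ is still $o(2^N)$ becomes $\Omega\bigl(N/(\log^2 N \log\log N)\bigr)$.

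For the upper bound, I would sample random subsets $A \subseteq G$ of size $s = \lceil \log_2 N \rceil$ and keep those satisfying $A+A \subseteq S$. A first-moment calculation gives $\E[\#\{A : A+A \subseteq S\}] = \binom{N}{s} \cdot 2^{-\Theta(\log^2 N)} = 2^{\Omega(\log^2 N)}$, so $S$ contains abundantly many sumsets of size $\Theta(\log^2 N)$. Running a random-greedy set cover using this family, a standard coupon-collector calculation shows that $O\bigl((N/\log^2 N)\cdot \log(1/\delta)\bigr)$ such sumsets cover all but a $\delta$-fraction of $S$. Taking $\delta = 1/\log^2 N$ yields a residual of size $O(N/\log^2 N)$, which is then covered by degenerate two-element sumsets $\{a,b\}+\{a,b\} = \{a+b\}$, one per residual element. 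Summing, $O(N\log\log N/\log^2 N)$ sumsets suffice.

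The main technical obstacle in both directions is controlling the interplay between $S$'s randomness and the additive-combinatorial structure of candidate sumsets. In the lower bound, compressing a sumset's description to roughly $\log\log N$ bits per generator element goes beyond what \cref{alem:manysums} alone gives; it likely requires a Freiman--Ruzsa-type bound on the coset-progression complexity of a typical $A$ with $A+A \subseteq S$, and this is the heart of the argument. In the upper bound, the coupon-collector step requires a second-moment (or concentration) argument showing that each $x \in S$ lies in roughly the expected number of candidate sumsets, \emph{uniformly} as the greedy procedure proceeds and biases the residual toward structured elements that are harder to cover.
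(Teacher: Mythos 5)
Your upper bound plan is essentially the paper's. Both sample cliques/sumsets on $\Theta(\log N)$ vertices, show there are many whose colors all lie in $S$, and cover $S$ greedily, finishing the small residual with singleton sumsets. (You implicitly use the convention, stated in the paper's Section~7, that $A+A$ contains only sums of distinct elements; otherwise $\{a,b\}+\{a,b\}$ need not be a singleton.) The paper works with rainbow cliques of size $(2-\eps)\log_2 N$ and runs a second-moment computation (in \cref{tcolor}) showing both that the number of available cliques concentrates and that all but $N^{o(1)}$ colors of $R$ lie in roughly the expected number of them, which is exactly the uniformity issue you flag for your coupon-collector step; the $\log\log N$ factor enters, just as you describe, as the number of iterations needed to drive the uncovered set down to $o(N/\log^2 N)$. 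So modulo carrying out that concentration argument, this direction matches the paper.

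The lower bound, however, has a real gap, and the paper's route is genuinely different. You keep the cutoff $s=\Theta(\log^2 N)$ from \cref{alem:manysumsIS} and hope to win back the $\log N/\log\log N$ factor by encoding each sumset $A+A$ in $O(|A|\log\log N)$ bits: a many-sums core $A'$ of size $\Theta(\log N)$ from \cref{alem:manysums} plus an unspecified ``additive-structure argument on the relative complement.'' This compression is not available. The core produced by \cref{alem:manysums} has \emph{large} doubling ($|A'+A'|\geq|A'|^2/4$), and any $A$ with $A+A\subseteq S$ for a random $S$ of density $1/2$ has $|A+A|\leq|S|\approx N/2$ while $|A|=\Theta(\log^2 N)$, so $A$ has doubling $\Theta(N/\log^2 N)$; this is the wrong regime for Freiman--Ruzsa, which compresses sets of \emph{small} doubling. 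Nor is there another visible mechanism: $A+A$ can have $\Theta(|A|^2)$ elements sitting essentially arbitrarily in $G$, so specifying it in $O(|A|\log\log N)$ bits would require structure you have not established, and you yourself defer to this as ``the heart of the argument.'' The paper instead improves the cutoff rather than the encoding: it invokes the result of Conlon, Fox, Pham, and Yepremyan (stated as a theorem just before \cref{tcolor}) that for a properly edge-colored $K_N$ and a random half-density subset $S$ of colors, with high probability no clique on more than $q=O(\log N\log\log N)$ vertices has all edge-colors in $S$. Via the Cayley coloring this says no sumset $A+A\subseteq S$ has $|A|>q$, so the number of unions of $k$ candidate sumsets is at most $N^{kq}$, giving $k=\Omega(N/(q\log N))=\Omega(N/(\log^2 N\log\log N))$. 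That external clique bound is the missing ingredient in your argument, and the counting after it is exactly the naive $N^{kq}$, not a compressed encoding.
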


Any abelian
group of order $N$ provides a properly edge-colored
complete graph $K_N$ in which the vertices correspond to the 
group elements and every edge $ab$ is colored by the sum $a+b$.
A sumset $A+A$ is thus simply the set of all colors that appear
in the induced subgraph on $A$. The results described here apply
to general properly edge-colored graphs (which are far more general
than the colorings correspodning to groups). The following result 
is proved in a recent paper of Conlon, Fox, Pham and Yepremyan.
\begin{theorem}[\cite{cfpy}]
	Let $K$ be a properly edge-colored complete graph on $N$ vertices, 
	and let
	$S$ be a random subset of the colors obtained by picking
	each color, randomly and independently, with probability 
	$1/2$. Then with high probability, the maximum number of
	vertices of a clique in the graph consisting of all edges
	colored by colors in $S$ is at most $O( \log N \log \log N)$.
\end{theorem}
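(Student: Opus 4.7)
The plan is to exploit the identification of $G$ with the vertex set of the properly edge-colored complete graph $K_N$ with $c(xy)=x+y$, so that (with the paper's convention of distinct summands) $A+A$ is precisely the palette of the clique induced on $A$, and expressing $S$ as a union of $k$ sumsets is equivalent to covering $S$ by the palettes of $k$ cliques in the auxiliary graph $H_S$ whose edges are those of $K_N$ with color in $S$.

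For the lower bound, a Chernoff bound gives $|S|\geq N/3$ with high probability, and the stated Conlon--Fox--Pham--Yepremyan theorem applied to this coloring gives that whp $\omega(H_S)=O(\log N\cdot\log\log N)$. Hence in any decomposition $S=\bigcup_{i=1}^k (A_i+A_i)$, every $A_i$ is a clique of $H_S$, so $|A_i|\leq M:=O(\log N\log\log N)$ and $|A_i+A_i|\leq \binom{M}{2}$. Summing yields $k=\Omega(|S|/M^2)$, which is the claimed lower bound up to a single $\log\log N$ factor; the stated $\Omega(N/(\log^2 N\log\log N))$ should be reached by sharpening this with a quantitative strengthening of CFPY --- very few cliques can be close to the maximum size $M$, so the bulk of the $A_i$ may be taken to satisfy $|A_i|=O(\log N)$ and $|A_i+A_i|=O(\log^2 N)$, which saves a $\log\log N$ factor in the bookkeeping.

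For the upper bound, I would work at the intermediate scale $m=\Theta(\log N/\sqrt{\log\log N})$. A first-moment computation gives that the expected number of $m$-cliques in $H_S$ is $\binom{N}{m}2^{-\binom{m}{2}}=2^{\Omega(m\log N)}$, because $m\log N$ dominates $\binom{m}{2}$ in this regime, and a second-moment (or Janson-type) estimate gives strong concentration. A random/greedy covering argument, iteratively choosing an $m$-clique in $H_S$ whose palette lies in the currently uncovered portion of $S$ and deleting those colors, should cover $S$ in $O(N/m^2)=O(N\log\log N/\log^2 N)$ rounds: each clique adds $\Theta(m^2)=\Theta(\log^2 N/\log\log N)$ new colors, and the concentration guarantees that such a clique still exists after any $o(N)$ colors have been removed.

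The main obstacle is the upper bound's covering efficiency: it is straightforward to show that $m$-cliques exist in $H_S$, but one must show that even after most of $S$ has been covered, the residual uncovered subset still contains an $m$-clique whose palette lies in the residual. This requires a self-similarity property of the random-colored model --- after shrinking $S$ to any $\Omega(N)$-sized subset, the induced ``sub-coloring'' should still satisfy both a CFPY-type upper bound and a matching lower bound on the count of $m$-cliques; otherwise the naive greedy loses an extra $\log N$ factor. The lower bound, by contrast, is essentially a black-box deduction from the stated CFPY theorem combined with a double-counting argument, and the only delicate point is bridging the $\log\log N$ gap between the naive counting bound $\Omega(N/(\log^2 N(\log\log N)^2))$ and the sharper $\Omega(N/(\log^2 N\log\log N))$ stated in the theorem.
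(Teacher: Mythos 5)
There is a fundamental mismatch: the statement you were asked to prove is the clique bound itself --- that for \emph{any} properly edge-colored $K_N$ and a random half-density subset $S$ of the colors, the graph of $S$-colored edges has clique number $O(\log N \log\log N)$ with high probability. Your proposal never proves this; it explicitly invokes ``the stated Conlon--Fox--Pham--Yepremyan theorem'' as a black box and instead sketches a proof of \cref{tunion} (the union-of-sumsets statement), which is exactly how the paper itself uses this result --- the paper does not prove it either, but cites it from \cite{cfpy}. As a blind proof attempt for the statement at hand, the argument is therefore circular: nothing in it establishes the $O(\log N\log\log N)$ bound. Note also that proving it is genuinely nontrivial and is not a routine first/second moment computation over cliques: in an arbitrary proper coloring a $t$-clique is only guaranteed to span $t-1$ distinct colors (e.g.\ in $\F_2^n$ an affine subspace with $2^d$ points spans only $2^d-1$ colors), so the probability that a fixed $t$-clique survives can be as large as $2^{-(t-1)}$ and the naive union bound $\binom{N}{t}2^{-(t-1)}$ diverges; the $\log\log N$ factor is precisely the price of such color-degenerate cliques, and none of this is addressed in your sketch.

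Even judged as a proof of \cref{tunion}, the sketch diverges from and falls short of the paper's arguments. For the lower bound, your per-set double counting ($|A_i+A_i|\le\binom{M}{2}$ with $M=O(\log N\log\log N)$) only yields $\Omega\bigl(N/(\log^2 N(\log\log N)^2)\bigr)$, and your proposed fix via a ``quantitative strengthening of CFPY'' is unnecessary: the paper counts candidate sumsets rather than colors --- there are at most $N^{q}$ sets of size at most $q=O(\log N\log\log N)$, hence at most $N^{kq}$ unions of $k$ sumsets, and comparing with $2^{0.9N}$ gives $k=\Omega\bigl(N/(q\log N)\bigr)=\Omega\bigl(N/(\log^2 N\log\log N)\bigr)$ directly. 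For the upper bound, the step you flag as ``the main obstacle'' --- that a greedy step still covers $\Theta(m^2)$ fresh colors after most of $S$ is covered --- is exactly the content of the paper's proof of \cref{tcolor}: a second-moment argument at scale $k=(2-\eps)\log_2 N$ showing concentration of the number of available rainbow $k$-cliques \emph{and} that all but $N^{o(1)}$ colors of $R$ lie in roughly a $(1+o(1))k^2/N$ fraction of them, after which $3N\log\log N/k^2$ greedy cliques plus single edges finish the cover. Leaving that step as an acknowledged obstacle leaves the covering bound unproven.
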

The first part of Theorem \ref{tunion} follows quickly from the result
above. Indeed, 
by this theorem, with high probability no sumset of a subset of size
larger than $q=O( \log N \log \log N)$ is contained
in a random subset $S$.  There are at most 
$\sum_{i=0}^q {N \choose i} \leq N^q$ such subsets, so the number of ways
to choose at most $k$ of them is not larger than $N^{kq}$. If this number is
smaller than, say, $2^{0.9 N}$, then most subsets of the group cannot be
expressed as   a union of (at most) $k$ sumsets, 
providing the assertion of 
the first part of theorem \ref{tunion}. 

In order to prove the second part we also consider the more general
setting of arbitrary proper edge colorings of complete graphs. Let
$K$ be a properly edge-colored complete graph on $N$ vertices.
Suppose further that each color
appears $(1+o(1))N/2$ times. Let $\eps>0$ be a fixed small
positive real, and put $k=(2-\eps) \log_2 N$ (where as before we assume
that $N$ is sufficiently large as a function of $\eps$ and where we omit
all floor or ceiling signs when these are not crucial). Call  a subset
$A$ of vertices of $K$, $|A|=k$, a {\em rainbow clique} if all edges
of it have distinct colors. Let $R$ be a random subset of the set of colors
obtained by picking each color, randomly and independently, with 
probability $1/2$. Call a rainbow clique $A$ {\em available} if all the 
${k \choose 2}$ colors
of its edges belong to $R$, and let $C(A)$ denote the set of these
colors. For convenience call also every single edge $e$ in $R$ an available
clique and denote its color by $C(e)$.
\begin{theorem}
	\label{tcolor}
In the above notation,
	with high probability, there is a collection $\CC$ of at most
	$N \log \log N/(\log^2 N)$ available cliques so that
	$R = \cup_{A \in \CC} C(A)$.
\end{theorem}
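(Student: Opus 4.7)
The plan is a two-stage construction. In stage one I would select a random collection $\mathcal{C}_0$ of available rainbow $k$-cliques that together cover all but a small remainder of the colors of $R$, and in stage two I would cover every remaining color using a singleton clique consisting of a single edge of that color (which is itself an available clique by the convention stated after the theorem). The target sizes are $|\mathcal{C}_0| = O(N\log\log N/\log^2 N)$ and $O(N/\log^2 N)$ singletons, together matching the claimed bound.

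Setup. A Chernoff bound gives $|R| = (1+o(1))N/2$ w.h.p. Since each color class is a matching of size $(1+o(1))N/2$ and $\binom{k}{2}=O(\log^2 N)\ll \sqrt{N}$, a birthday-type estimate shows that the total number $T$ of rainbow $k$-cliques in $K$ is $(1-o(1))\binom{N}{k}$, so $\E[|\Omega(R)|] = T\cdot 2^{-\binom{k}{2}}$ is super-polynomial in $N$, where $\Omega(R)$ denotes the set of available rainbow $k$-cliques. For each color $c$ let $d(c)$ denote the number of available rainbow $k$-cliques containing $c$ as an edge color; double counting gives $\sum_{c\in R}d(c) = |\Omega(R)|\binom{k}{2}$, so $d(c)$ averages to $D:=|\Omega(R)|\binom{k}{2}/|R|$, while a direct computation (summing over the $\sim N/2$ edges of the matching $M_c$, each extending to $(1-o(1))\binom{N-2}{k-2}$ rainbow $k$-cliques with availability probability $2^{-(\binom{k}{2}-1)}$) yields $\E[d(c) \mid c\in R]\sim (N/2)\binom{N-2}{k-2}\cdot 2^{-(\binom{k}{2}-1)}$, independent of $c$ to leading order.

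Concentration and random covering. A second moment computation over the random $R$ would give $\Var(d(c)\mid c\in R) = o(D^2)$: the dominant contribution to $\E[d(c)^2\mid c\in R]$ comes from pairs $(A_1,A_2)$ of rainbow $k$-cliques that share only the color $c$, and pairs with $|C(A_1)\cap C(A_2)|=s\ge 2$ are smaller by a factor $(O(\log^2 N/N))^{s-1}$ because color classes are matchings so each extra shared color forces $A_2$ to contain both endpoints of a specific edge. By Chebyshev and a union bound over $c\in R$, with high probability over $R$ the set $R_{\mathrm{bad}}:=\{c\in R: d(c)<D/2\}$ has size at most $N/\log^2 N$ (in fact $\poly\log N$ suffices). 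Next I would include each $A\in\Omega(R)$ in $\mathcal{C}_0$ independently with probability $p_0 = 4\log\log N/D$. Then $\E[|\mathcal{C}_0|] = p_0|\Omega(R)| = O(N\log\log N/\log^2 N)$, and for every $c\notin R_{\mathrm{bad}}$ the probability that $c$ is uncovered by $\mathcal{C}_0$ is $(1-p_0)^{d(c)}\le e^{-p_0 D/2} = O(1/\log^2 N)$. Thus the expected number of uncovered good colors is $O(N/\log^2 N)$. By Markov combined with a Chernoff bound on $|\mathcal{C}_0|$, there is a realization of $\mathcal{C}_0$ where both quantities are within constant factors of their means. Finally, I would add one singleton clique per uncovered color, contributing $O(N/\log^2 N)$ further cliques in total.

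The hard part will be the second moment computation for $d(c)$. One must carefully stratify pairs $(A_1,A_2)$ of rainbow $k$-cliques both containing color $c$ by $s := |C(A_1)\cap C(A_2)|$ and $t := |V(A_1)\cap V(A_2)|$ and show that every stratum beyond the trivial one (pairs sharing only the color $c$) contributes negligibly to $\E[d(c)^2\mid c\in R]$. The matching structure of the color classes gives just enough slack for $k = (2-\eps)\log_2 N$: each extra shared color reduces the count of valid $A_2$ by a factor of order $\binom{k}{2}/N = O(\log^2 N/N)$, which outweighs the factor $2$ saved in the availability probability for that color.
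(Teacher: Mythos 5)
Your high-level plan is sound and differs from the paper's in one pleasant way: instead of a greedy covering argument (repeatedly choose the available rainbow $k$-clique covering the most yet-uncovered colors), you use a random covering plus alteration (include each available clique independently with probability $p_0 \approx \log\log N/D$, then mop up the uncovered colors with singletons). Both routes require exactly the same two ingredients — concentration of the total number of available rainbow $k$-cliques and concentration of the per-color degree $d(c)$ — and both then produce the stated bound, so this is a cosmetic rather than a substantive difference.

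The real difficulty, which you correctly identify as ``the hard part,'' is the second moment bound for $d(c)$, and here your heuristic is not right. You claim that ``each extra shared color reduces the count of valid $A_2$ by a factor of order $\binom{k}{2}/N$, which outweighs the factor $2$ saved in the availability probability.'' This per-color accounting is false precisely in the regime that makes the computation delicate: when the shared colors come from shared \emph{vertices}. If $A_1$ and $A_2$ share $t$ vertices, they automatically share $\binom{t}{2}$ colors, so going from $t$ to $t+1$ shared vertices adds $t$ shared colors but costs only one factor of roughly $N/k$ in the count — not $(\log^2 N/N)^{t}$ as your heuristic would predict. Thus the probability gain $2^{s}$ grows like $2^{\binom{t}{2}}$, quadratically in the ``degrees of freedom'' lost, while the count penalty is only $\sim N^{-t}$. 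The reason the contribution is still negligible is more delicate: with $k=(2-\eps)\log_2 N$ one has $2^{\binom{t}{2}}/N^{t-1} \approx (2^{(t-1)/2}/N)^{t-1} \le N^{-\eps(t-1)/2+o(t)}$, so the $\eps$-slack in $k$ is doing the work, not a per-color factor of $\log^2 N/N$. The paper handles this cleanly by stratifying not by $s$ or $t$ but by the size $j$ of a maximal forest among the shared-color edges of $A_2$: the proper coloring means that once the forest and one vertex per component are chosen the rest of $A_2$ is determined, giving a count $\sim N^{k-j}$, while the shared-color gain is at most $2^{\binom{j+1}{2}}$. Your stratification by $(s,t)$ alone is not sufficient — e.g., with $t=0$ the shared-color edges can still form a long path or tree, and only a forest-type parameter controls the degrees of freedom — so following your outline as stated would leave a genuine gap. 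Minor point: your ``Chebyshev and union bound'' over $c \in R$ should be Chebyshev (per color) followed by Markov on the number of bad colors, since the per-color failure probability is only polynomially small, not small enough for a union bound over $N$ colors.
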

\begin{proof}
The main part of the proof is a second moment argument that shows that
with high probability the total number of available rainbow cliques 
is close to its expectation and that with high probability almost every edge
whose color lies in $R$
belongs to roughly the same number of such available cliques. The required
collection $\CC$ can then be chosen  greedily among the
	available rainbow cliques by repeatedly adding such a clique that
	covers a maximum number of yet uncovered colors, 
	together with a smaller additional
	number of edges. We proceed with the details.

	The second moment argument resembles the one used in proof of the
	typical behaviour of the maximum clique in the random graph $G(n,1/2)$
	as described, for example, in \cite{alon2016probabilistic} section
	4.5. However, the dependence between edges of the same color 
	leads to several complications and requires some
	additional arguments.
	
Note, first, that almost every set of $k$ vertices spans a rainbow clique
in $K$. Indeed the fraction of $k$-cliques that contain two edges of the same
	color is at most $O(k^4/N)=N^{-1+o(1)}$. For each rainbow 
	$k$-clique $A$, let $X_A$ denote the indicator random variable 
	with value $1$ iff $A$ is available, that is, all the colors
	of its edges lie in $R$. Let $X=\sum X_A$ be the number of
	available $k$ cliques. Since any
	rainbow $k$-clique
	is available with probability $2^{-{k \choose 2}}$, 
	by linearity of expectation, the
	expected number of available rainbow $k$-cliques is
	$$
	\mu=(1-o(1)) {N \choose k} 2^{-{k \choose 2}}
	>\left(\frac{N 2^{-(k-1)/2}}{k}\right)^k > N^{\eps k/5}>N^{100}.
	$$

	Let $\mbox{Var}$ denote the variance of $X$. This is
	the sum of the variances of the indicator variables
	$X_A$ (which is smaller than $\mu$) plus the sum over all
	ordered pairs $A,A'$ of the covariances $Cov(X_A,X_{A'})$.
	If $A$ and $A'$ share no common color this covariance is
	$0$. Otherwise, it is at most the probability that both
	cliques $A,A'$ are available.  For each $j$, $1 \leq j \leq k-1$,
	let $\Delta_j$ denote the sum of probabilities that
	$X_A=X_{A'}=1$ where $(A,A')$ range over all ordered pairs of
	rainbow $k$-cliques in which the largest
	forest in the set of all edges of $A'$ that share colors 
	with the edges in $A$ contains $j$ edges. By the discussion
	above $\mbox{Var} \leq \mu+\Delta$, where 
	$\Delta=\sum_{j=1}^{k-1} \Delta_j$.
	The following upper bound for
	$\Delta_j$ (which can be improved) suffices for our purpose here.
	\vspace{0.2cm}

\noindent
	{\bf Claim:}\, For every $1 \leq j \leq k-1$
	$$
	\Delta_j \leq {N \choose k} {{k \choose 2} \choose j}
	{k \choose 2}^j N^{k-j} \frac{1}{(k-2j)!} 
	2^{-2{k \choose 2}+{{j+1} \choose 2}},
	$$
	where for $j>k/2$ the $(k-2j)!$ term should be replaced by $1$.
	\vspace{0.2cm}

	\noindent
	{\bf Proof of claim:}\, 
	There are at most ${N \choose k}$ ways to choose the first rainbow
	clique. Then there are  
	${{k \choose 2} \choose j}$ ways to choose $j$ colors that
	appear in it. There are less than ${k \choose 2}^j$ ways to place
	these colored edges as a forest on $k$ labelled vertices.
Once this forest is chosen, we can select the second clique by selecting
	its first vertex in each connected component of the forest 
	(including the ones of size $1$). This corresponds to the 
	$N^{k-j}$ factor. When $2j<k$ at least $k-2j$ of these first vertices
	are in components of size $1$, hence we can divide by $(k-2j)!$.
	The crucial point here is that since the 
	edge coloring is proper, this information
	suffices to reconstruct all vertices of the second rainbow clique.
	Finally, the total number of common colors between the two cliques
	is at most ${{j+1} \choose 2}$ (obtained 
	only when the forest is a tree
	and the set of edges of the second clique that share colors
	with the first forms a clique on $j+1$ vertices).
	Even in this case, the probability that all required colors 
	are in $R$ is at most $2^{-2{k \choose 2}+{{j+1} \choose 2}}$. 
	This completes
	the proof of the claim.  \hfill $\Box$

Returning to the proof of the theorem  we next show that the 
variance $\mbox{Var}$
is much smaller than $\mu^2$. Indeed, 
$$
\frac{\Delta_j}{\mu^2} \leq \left(\frac{k^6 2^{(j+1)/2}}{N}\right)^j.
$$
For small $j$, say $j \leq \log \log N$, this ratio is
less than $N^{-(1+o(1))j}$. For $\log \log N<j \leq (2-\eps) \log_2 N$
this ratio is less than $N^{-\eps j/3}$ which is much smaller than, say,
$N^{-100}$. This shows that the variance $\mbox{Var} \leq \mu + \Delta$ is
at most $\mu^2/N^{1-o(1)}$. Therefore, by Chebyshev's Inequality, with
high probability the total
number of available rainbow $k$-cliques is $(1+o(1))\mu$ where
$\mu$ is the expectation of this quantity (which is 
much larger than $N^{100}$). 

We next show that for every fixed edge of $K$, if the random set of colors
$R$ contains its color, then with high probability it lies in roughly the
expected number of available rainbow $k$-cliques that contain it. 
The computation here 
is similar to the previous one, and here too the variance
is at most the square of the expectation divided by $N^{1-o(1)}$. As the
computation is very similar, we omit it. By Chebyshev's Inequality
and Markov'v Inequality,
with high probability every color that appears in $R$  besides
$N^{o(1)}$ of them appears in roughly the same number of available rainbow
$k$-cliques, which is a $(1+o(1))(k^2/N)$ fraction of the total number of
such cliques. We can now choose greedily $3 N \log \log N /k^2$ 
available rainbow $k$ cliques one by one, where in each step
we choose such a clique that covers the maximum number of yet
uncovered colors in $R$. As in each step we cover at least
a $(1+o(1))k^2/N$ fraction of the remaining colors, this will
cover all
colors of $R$ besides $o(N/ \log^2 N)$ of them. These can be covered by
edges, completing the proof and implying also the
assertion of the second part of Theorem \ref{tunion}.
\end{proof}

\section{Concluding Remarks and Open Problems}\label{sec:summary}

Natural extensions of the main question studied in this paper include studying the cardinality of the family of sumsets in~$\F_p^n$ for~$p>2$, and studying the cardinality of the family of higher-orders of iterated sums (e.g., sets of the form~$A+A+A$ for some~$A\subseteq \F_2^n$, or generally~$kA$ for any~$k>2$).

The lower bound in Theorem \ref{tunion} can be improved to
$\Omega(N/\log^2 N)$
for some groups, like the cyclic group $Z_N$.
Indeed, Green showed in 
\cite{green2005counting} that the largest $A$ so that the sumset
$A+A$ lies in a random subset of $Z_N$ is, with high probability,
of size $O(\log N)$. This, together with the 
counting argument described in the 
proof of the first part of theorem \ref{tunion} here,
supplies the improved bound. It seems 
plausible 
that the $\log \log N$ factor can be removed in both the 
upper and the lower bounds in Theorem \ref{tunion} for every abelian 
group of order $N$. A somewhat related known result of Frieze and Reed
(\cite{FR95}), that holds in
the simpler case of the usual random graph
$G=G(N,1/2)$,  is that the minimum number of cliques required to cover
all edges of $G$ is, with high probability, $\Theta(N^2/\log^2 N)$. 

	It is worthwhile to add that much fewer sumsets suffice to express a random
subset of $\F_2^n$ as their intersection. Indeed, for a random
subset $S$ (containing $0$), define, for every $1 \leq i \leq n$,
$A_i=S \cap (\F_2^n \setminus e_i^{\perp} ) \cup \{0\}$,
where $e_i$ is the vector of Hamming  weight $1$ with its unique
$1$ coordinate in the $i$-th place.
Then $(A_i+A_i)\cap (\F_2^n \setminus e_i{\perp})= 
(S \cap (\F_2^n \setminus e_i{\perp})$, and with high probability
(that is, with probability tending to $1$ as $n$ tends to infinity)
$(A_i+A_i) \cap e_i^{\perp}=e_i^{\perp}$ for every $i$. 
It is easy to check that 
$S$ is the intersection of these $n$ sumsets $A_i+A_i$.
\vspace{0.2cm}

\noindent
{\bf Acknowledgment:}\,
We thank Matthew Jenssen for helpful comments and references.


\end{document}